\documentclass{amsart}
\usepackage{amsmath,latexsym,amssymb,amsfonts, dsfont}

\newtheorem{thm}{Theorem}[section]
\newtheorem{defn}[thm]{Definition}
\newtheorem{lemma}[thm]{Lemma}
\newtheorem{remark}[thm]{Remark}
\newtheorem{remark/definition}[thm]{Remark/Definition}
\newtheorem{prop}[thm]{Proposition}
\newtheorem{cor}[thm]{Corollary}

\newcommand{\cA}{\mathcal{A}}

\newcommand{\cB}{\mathcal{B}}
\newcommand{\cD}{\mathcal{D}}

\newcommand{\Sb}{\Sigma_{\mathcal{B}}}
\newcommand{\Sbd}{\Sigma_{{\mathcal{B}:\mathcal{D}}}}

\newcommand{\id}{\text{Id}}
\newcommand{\cM}{\mathcal{M}}
\newcommand{\cfree}{\framebox{c}}

\newcommand{\CR}{{}^cR}

\newcommand{\lra}{\longrightarrow}

\newcommand{\X}{\mathcal{X}}

\newcommand{\bx}{\cB\langle \mathcal{X} \rangle}

\usepackage{amssymb}
\input xy
\xyoption{all}

\newcommand{\ncspace}[1]{\ensuremath{#1}_{\text{nc}}}

\newcommand{\ball}{\mathbb{B}}
\newcommand{\Nilp}{\text{Nilp}}
\newcommand{\ten}[1]{\mathbf{T}(#1)}

\newcommand{\cV}{\mathcal{V}}
\newcommand{\cW}{\mathcal{W}}

\title[infinite divisibility, non-commutative Bercovici-Pata bijection]{infinite divisibility and a non-commutative Boolean-to-free Bercovici-Pata bijection}
\author{S. T. Belinschi}\address{Department of Mathematics and Statistics, University of Saskatchewan,
106 Wiggins Road, Saskatoon, SK, S7N 5E6, CANADA, and
\newline
Institute of Mathematics ``Simion Stoilow'' of the Romanian Academy.
}
\email{belinschi@math.usask.ca}
\author{M. Popa}
\address{Center for Advanced Studies in Mathematics at the Ben Gurion  University of Negev, P.O. B. 653, Be'er Sheva 84105, Israel,
and
\newline Institute of Mathematics ``Simion Stoilow'' of the Romanian Academy, P.O. Box 1-764, Bucharest, RO-70700, Romania}
\email{popa@math.bgu.ac.il}
\author{V. Vinnikov}
\address{Department of Mathematics, Ben Gurion University of Negev, Be'er Sheva 84105, Israel}
\email{vinnikov@cs.bgu.ac.il}
\thanks{Research of STB was supported by a Discovery grant from NSERC Canada, and a University of Saskatchewan start-up grant.}

\parindent=40pt
\date{\today}
\begin{document}


\begin{abstract}
We use the theory of fully matricial, or noncommutative, functions to investigate infinite divisibility and limit theorems in operator-valued noncommutative probability. Our main result is an operator-valued analogue for the Bercovici-Pata bijection. An important tool is Voiculescu's subordination property for operator-valued free convolution.
\end{abstract}

\maketitle

\section{introduction}
The study of problems in operator algebras from a probabilistic perspective recorded numerous successes in the recent decades. Notions of independence specific to noncommutative probability setting - especially Voiculescu's free independence - were shown to be important for encoding structural properties of certain operator algebras. Numerous analogues of classical probabilistic notions and phenomena were found to hold for free, Boolean or conditionally free independences. In particular, conditional expectations in the context of noncommutative algebras are old and well-known; however, noncommutative independences over subalgebras with respect to conditional expectations are new and not yet very well studied and understood: it was Voiculescu \cite{V*} who generalized his own free independence to freeness (with amalgamation) over a noncommutative subalgebra, while for the monotone independence of Muraki \cite{muraki}, Boolean independence of Speicher and Woroudi \cite{SW} and the conditionally free independence
  and convolution of Bo\.zejko, Leinert and Speicher \cite{bsk}, the generalization was achieved by Popa \cite{P,mv-bool} and \cite{mvcomstoc}.

Let us briefly introduce some of the more important notions
from noncommutative probability that we will study in this paper. We shall call a {\em noncommutative probability space} a pair $(\cA, \varphi)$, where $\mathcal A$ is a unital $\ast$-algebra over the complex numbers and $\varphi\colon\mathcal A\to\mathbb C$ is a positive functional normalized so that it carries the unit $1\in\mathcal A$ of $\mathcal A$ into the complex number 1. The algebra $\mathcal A$ plays the role of the algebra of complex-valued measurable functions on a classical probability space and $\varphi$ plays the role of integration with respect to the probability measure. By following this analogy, an {\em operator-valued noncommutative probability space} is naturally defined as a triple $(\mathcal A, E_\mathcal B,\mathcal B)$, where $\mathcal A$ is again a unital $*$-algebra, $\mathcal B$ is a $\ast$-subalgebra of $\mathcal A$ containing the unit of $\mathcal A$ and $E_\mathcal B\colon\mathcal A\to\mathcal B$ is a conditional expectation, i.e. a positive linear $\mathcal B-\mathcal B$ bimodule map. (In some contexts, this definition will turn out to be too restrictive or too broad; thus, we will usually specify our requirements, assumptions and notations on a case-by-case basis.) When $\mathcal B=\mathbb C$, we deal with an ordinary noncommutative probability space. Elements $X\in\mathcal A$ are called {\em random variables} or (in the second context) {\em operator-valued (or $\mathcal B$-valued) random variables}.

If $(\cA, \varphi)$ is a non-commutative probability space, the distribution of a self-adjoint element $X$ of $\cA$ (non-commutative random variable) is a real measure $\mu_X$ described via
  \[ \int t^n d\mu_X(t)=\varphi(X^n).\]
  As shown in \cite{V*}, for the operator-valued non-commutative random variables, the appropriate analogue of the distribution is an element of $\Sigma_\cB$, the set of positive conditional expectations from the $\ast$-algebra of non-commutative polynomials with coefficients in some C$^\ast$-algebra $\cB$ to $\cB$. In this setting, the op-valued analogues of compactly supported real measures are positive conditional expectations from $\Sigma_\cB$ with moments not growing faster than exponentially (see below).

In probability theory limit theorems play a central role. Historically, among the first results proved for each new type of noncommutative independence was a Central Limit Theorem: Voiculescu identified the Wigner law as the free central limit for scalar-valued free independence \cite{V-JFA} and its operator-valued analogue \cite{V*}, Muraki \cite{muraki} showed that the arcsine distribution is the monotone central limit and Popa \cite{P} identified an operator-valued analogue for the arcsine, Speicher and Woroudi \cite{SW} proved the Boolean central limit theorem, and Bo\.zejko, Leinert and Speicher \cite{bsk} described the pairs of measures that appear as conditionally free central limits. However, the ``most general'' limit theorems involve so-called infinitesimal arrays, and the limit distributions are usually shown to identify with {\em infinitely divisible} distributions. Some descriptions/characterizations of infinite divisibility are known for all noncommutative scalar independences \cite{BVIUMJ,SW,muraki,ADK,W}, but very little is known about operator-valued infinite divisibility; until recently, the only exception we know of was Speicher's work \cite{speicherhab}. In a recent breakthrough, Popa and Vinnikov \cite{popa-vinnikov} gave a description of free, Boolean and conditionally free infinitely divisible distributions in terms of their linearizing transforms that parallels the results of Bercovici and Voiculescu \cite{BVIUMJ}, Speicher and Woroudi \cite{SW} and Krysztek \cite{ADK}, respectively. In this paper we will use some of the results from \cite{popa-vinnikov}, the subordination result of Voiculescu \cite{V2} and the theory of noncommutative functions to prove a Hin\c{c}in type theorem for free and conditionally free convolutions and to identify operator-valued analogues of the Bercovici-Pata bijection \cite{BP}. In addition, we prove a generalization  to conditionally free convolution of the result of Belinschi and Nica \cite{BN} which states that the Boolean Bercovici-Pata bijection is a homomorphism with respect to free {\em multiplicative} convolution.

Our paper is organized as follows: in the second section we define the main notions and tools that we shall use, and prove a few preparatory results, in Section 3 we prove a Hin\v{c}in type theorem for certain infinitesimal arrays of operator-valued random variables; using this result, we obtain a restricted Boolean-to-free Bercovici-Pata bijection, in Section 4 we prove our main result, the Boolean-to-conditionally free Bercovici-Pata bijection, and finally in Section 5 we show that the Boolean-to-conditionally free Bercovici-Pata bijection for scalar-valued random variables is a homomorphism with respect to multiplicative c-free convolution.

\section{independences, transforms and subordination}

We start with a precise definition of an operator valued distribution. Generally, we will assume that $\mathcal B$ is a  unital C*-algebra.
We will denote by $\bx$  the $\ast$-algebra freely generated by $\cB$ and the selfadjoint symbol $\X$. We will also use the notation $\cB\langle \X_1, \X_2, \dots\rangle$ for the $\ast$-algebra freely generated by $\cB$ and the non-commutating self-adjoint symbols $\X_1, \X_2, \dots$. The set of all positive conditional expectations from $\bx$ to $\cB$ will be denoted by $\Sb$.

 For $\cB\subseteq\cD$ a unital inclusion of C$^\ast$-algebras, we denote by $\Sbd$ the set of all unital,  positive $\cB$-bimodule maps $\mu:\bx\lra\cD$ with the property that for all positive integers $n$ and all $\{f_i(\X)\}_{i=1}^n\subset \bx$ we have that:
\begin{equation}\label{prop}
\bigl[\mu(f_j(\X)^\ast f_i(\X))]_{i,j=1}^n\geq 0\ \text{in $M_n(\cB)$}.
\end{equation}
Remark that $\Sb=\Sigma_{\cB:\cB}$, as an easy consequence of Exercise 3.18 from \cite{paulsen}.

Using these notations, we define the distribution of an operator-valued random variable:
\begin{defn}\label{defn1}
  If $\cB\subseteq\cA$, $\cB\subseteq \cD$ are unital inclusions of $\ast$-algebras, respectively C$^\ast$-algebras, $\phi:\cA\lra\cD$ is a unital positive $\cB$-bimodule map and $a$ is a selfadjoint element of $\cA$, we will denote by $\cB\langle a \rangle$ the $\ast$-algebra generated in $\cA$ by $\cB$ and $a$ and by $\phi_a$, ``the $\cD$-distribution'' of $a$, that is the positive $\cB$-bimodule map $\phi_a:\bx\lra \cD$ defined by
 $\phi_a=\phi\circ \tau_a$
 where $\tau_a:\bx\lra \cA$ is the unique homomorphism such that $\tau_a(\X)=a$ and $\tau_a(b)=b$ for all $b\in\cB$.
\end{defn}

 We will denote by $\Sbd^0$ the set of elements from $\Sbd$ with the property that there exists some $M>0$ such that, for all  $b_1, \dots, b_n\in \cB$ we have that
 \begin{equation}\label{pr2} ||\mu(\X b_1 \X b_2\cdots \X b_n \X)||< M^{n+1}||b_1||\cdots ||b_n||.
 \end{equation}

 With the notations from Definition \ref{defn1}, an element $\mu\in\Sbd$ can be realized as $\phi_a$ for some C$^\ast$-algebra $\cA$ containing $\cB$ and $a$ a self-adjoint from $\cA$ if and only if $\mu\in\Sbd^0$ (see \cite{popa-vinnikov}, Proposition 1.2).

\subsection{Independences}
There are several independences important for noncommutative probability. We shall start with the oldest and best-known, Voiculescu's free independence. We present it in a C${}^*$-algebraic context.
\begin{defn}\label{defn31}
Let $\cB$ be a C$^\ast$-algebra, $\cB\subseteq\cA$ be a unital inclusion of $\ast$-algebras and $\phi\colon\mathcal A\to\mathcal B$ be a positive conditional expectation. A family $\{X_i\}_{i\in I}$ of selfadjoint elements from $\mathcal A$ is said to be {\em free} with respect to $\phi$ if
$$
\phi(A_1A_2\cdots A_n)=0
$$
whenever $A_j\in\mathcal B\langle X_{\epsilon(j)}\rangle$ with $\phi(A_j)=0$, where $\epsilon(j)\in I$, with $\epsilon(k)\neq\epsilon(k+1)$ for $1\leq k\leq n-1$.
\end{defn}

  Let now $N\in\mathbb{N}$ and $\{\mu_j\}_{j=1}^N$ be  a family of elements from $\Sigma_\cB$. We define their additive free  convolution the following way: 
Consider the symbols $\{X_j\}_{j=1}^N$; on the algebra $\cB\langle X_1, X_2, \dots, X_N \rangle$ take the conditional expectation $\mu$ such that $\mu\circ \tau_{X_j}=\mu_j$ and the mixed moments of $X_1, \dots, X_n$ are computed via the rules  from Definition \ref{defn31}. The free additive convolution of $\{\mu_j\}_{j=1}^N$ is the conditional expectation
  \[\boxplus_{j=1}^N \mu_j=\mu\circ\tau_{X_1+X_2+\cdots+X_N}:\cB\langle X_1+X_2+\dots+X_N\rangle\cong\bx\lra\cB.\]

 We have that $\boxplus_{j=1}^N \mu_j$ is also an element of $\Sb$: in \cite{speicherhab} 
it is shown that $\mu$, defined as above, is a positive conditional expectation, therefore so is $\mu\circ\tau_{X_1+X_2+\cdots+X_N}$.

Secondly, we give the op-valued equivalent of Speicher and Woroudi's Boolean independence \cite{SW} as it appears in \cite{popa-vinnikov}:
\begin{defn}\label{def-Boolean}
Let $\cB\subseteq\cA$, $\cB\subseteq \cD$ be unital inclusions of $\ast$-algebras, and $\phi\colon\mathcal A\to\mathcal D$ a unital completely positive $\mathcal B$-bimodule map. A family $\{X_i\}_{i\in I}$ of selfadjoint elements from $\mathcal A$ is said to be {\em Boolean independent} with respect to $\phi$ if
$$
\phi(A_1A_2\cdots A_n)=\phi(A_1)\phi(A_2)\cdots \phi(A_n)
$$
whenever $A_j$ is in the \emph{nonunital} $\ast$-subalgebra over  $\cB$ generated by $X_j$, where $\epsilon(j)\in I$ with $\epsilon(k)\neq\epsilon(k+1)$ for $1\leq k\leq n-1$.
\end{defn}

The definition of Boolean convolutions of distributions from $\Sigma_{\mathcal B:\mathcal D}$ is done similarly to free convolutions of distributions from $\Sigma_{\mathcal B}$ , as shown in \cite{popa-vinnikov}, by simply replacing free with Boolean independence. The reader will observe that this definition makes sense for $\mathcal B=\mathcal D$; the broader context that we provide adds in fact more depth to the theory. This will become clearer in the following definition, which essentially unites free and Boolean independence.

 We aim to extend the results that can be obtained for free independence to the case $\mu\in\Sbd$. In this setting, if $\cB$ is simply replaced by $\cD$, the resulting relation does not uniquely determine the joint moments of $X_1, \dots, X_n$. As shown in \cite{boca90,dykemablanchard}, a more suitable approach is the c-freeness (see also \cite{mvcomstoc} and \cite{bsk}).

 \begin{defn}\label{def-cfree}
 Let $\cB\subseteq\cA$, $\cB\subseteq \cD$ be unital inclusions of $\ast$-algebras, $E_\cB:\cA\lra\cB$ be a positive conditional expectation and $\theta:\cA\lra\cD$ be a unital $\cB$-bimodule map.

 The family $\{X_i\}_{i\in I}$ of selfadjoint elements from $\cA$ is said to be c-free with respect to $(\theta, \varphi)$ if
 \begin{enumerate}
 \item[(i)] the family $\{X_i\}_{i\in I}$ is free with respect to $E_\cB$
 \item[(ii)] $\theta(A_1A_2\cdots A_n)=\theta(A_1)\theta(A_2)\cdots \theta(A_n)$ for all $A_i\in\cB\langle X_{\epsilon(i)}\rangle$ such that $E_\cB(A_i)=0$ and $\epsilon(k)\neq\epsilon(k+1)$.
 \end{enumerate}
 \end{defn}
The reason for switching to the notation $E_\mathcal B$  will be seen later. We will consider the above relations in the framework of $\cB\subset\cD$ a unital inclusion of C$^\ast$-algebras and $\theta$ a completely positive map.As shown in \cite{boca90}, this setting (that includes $\Sbd$ is closed with respect to c-free convolution.

Next, we describe one of our main tools in analyzing distributions of sums of independent (freely, Boolean or c-freely) operator-valued random variables, namely {\em noncommutative} {\em sets and functions \cite{ncfound}.} We will use the terminology from \cite{ncfound} (see also \cite{taylor}, \cite{popa-vinnikov}, but translating the results in the different terminology from \cite{V1} is trivial.)

For a vector space $\cV$ over $\mathbb{C}$, we let
$M_{n\times m}(\cV)$, denote $n \times m$ matrices over $\cV$ and write $M_n(\cV)$ for $M_{n\times n}(\cV)$.
 We define the {\em noncommutative space} over $\cV$ by
$\ncspace{\cV} = \displaystyle\coprod_{n=1}^\infty M_n(\cV)$.
We call $\Omega \subseteq \ncspace{\cV}$ a {\em noncommutative set} if it is closed under direct sums.
Explicitly, denoting $\Omega_n = \Omega \cap M_n(\cV)$,
we have
\[
a \oplus b =\begin{bmatrix}
a & 0\\
0 & b\end{bmatrix}\in \Omega_{n+m}
\]
for all $a \in \Omega_n$, $b \in \Omega_m$.
Notice that matrices over $\mathbb{C}$ act from the right and from the
left on matrices over $\cV$  by the standard rules of
matrix multiplication.

Let $\cV$ and $\cW$ be vector spaces over $\mathbb{C}$, and let
$\Omega\subseteq\ncspace{\cV}$ be a noncommutative set. A
mapping $f \colon \Omega \to \ncspace{\cW}$ with
$f(\Omega_n) \subseteq M_n(\cW)$ is called a
\emph{noncommutative function} if $f$ satisfies the following two
conditions:
\begin{itemize}
\item $f$ \emph{respects direct sums}:
$f(a \oplus b) = f(a) \oplus f(b)$
for all $a,b \in\Omega$.
\item $f$ \emph{respects similarities}:
if $a \in \Omega_n$ and $s \in M_n(\mathbb{C})$ is invertible with
$sas^{-1} \in \Omega_n$, then
$f(sas^{-1}) = s f(a) s^{-1}$.
\end{itemize}
We will denote $f^{(n)} = f|_{\Omega_n} \colon \Omega_n \to M_n(\cW)$.
For convenience, we will refer sometimes, when there is no risk of confusion, to $f^{(n)}$ and $\Omega_n$ as the $n^{\rm th}$ {\em coordinate} of the respective noncommutative function and set.

A noncommutative set $\Omega\subseteq\ncspace{\cV}$ is called {\em upper admissible} if for all
$a \in \Omega_n$, $b \in \Omega_m$ and all $c \in M_{n\times m}(\cV)$,
there exists $\lambda \in \mathbb{C}$, $\lambda \neq 0$, such that
\[
\begin{bmatrix} a & \lambda c \\ 0 & b \end{bmatrix} \in \Omega_{n+m}.
\]
Non-commutative functions on upper-admissible sets admit a consistent differential calculus, including Taylor expansions by defining the (right) noncommutative difference-differential operators
by evaluating a noncommutative function on block upper triangular matrices (see again \cite{ncfound}). In this paper we will use only the following three upper admissible noncommutative sets ($\cA$ will denote a C$^\ast$-algebra):
\begin{enumerate}
\item
The set $\Nilp(\cA) = \coprod_{n=1}^\infty \Nilp(\cA;n)$; here the set $\Nilp(\cA;n)$ consists of
all $a \in M_n(A)$ such that $a^r = 0$ for some $r$, where we view $a$ as \emph{a matrix over
the tensor algebra} $\ten{\cA}$ of $\cA$ over $\mathbb{C}$
\item
Noncommutative balls $\ball(\cA,\rho) = \left\{a \in \ncspace{\cA} \colon \|a\| < \rho\right\}$
of radius $\rho>0$
over $\cA$ ($\cA$ could have been replaced by any operator space with the corresponding
operator space norm).
\item Noncommutative half-planes $\mathbb H^+(\ncspace{\cA}) = \left\{a \in \ncspace{\cA} \colon \Im a> 0\right\}$
over $\cA$. Here $\Im a=(a-a^*)/2i$ denotes the imaginary part of $a$. (We say that an element $b$ in a unital C${}^*$-algebra satisfies $b>0$ if there exists $\varepsilon\in(0,+\infty)$ so that $b\ge\varepsilon 1$, where here 1 is the unit of $\mathcal A$; of course, for an element $a\in\ncspace{\cA}$ to have imaginary part strictly greater than zero means simply that each ``coordinate'' has imaginary part strictly positive.) It has been first noted by Voiculescu \cite{V1} that $\mathbb H^+(\ncspace{\cA})$ is indeed a noncommutative set.
\end{enumerate}

\subsection{Op-valued distributions and properties of their transforms}

As for scalar-valued (non)commutative probability, there are transforms
that linearize different kinds of convolutions of operator-valued distributions. It turns out that these transforms can be described in terms of noncommutative functions defined on noncommutative spaces, associated to operator-valued distributions. First, we introduce some terminology and notations. If $\mathcal A\supseteq\mathcal B$ is a unital inclusion of $\ast$-algebras and $E_\mathcal B\colon\mathcal A\to\mathcal B$ is a conditional expectation, then $E_{M_n(\mathcal B)}=E_\mathcal B\otimes 1_n\colon M_n(\mathcal A)\to M_n(\mathcal B)$ is still a conditional expectation for any $n\in\mathbb N$ and any linear functional (in particular any trace $\tau$) on $\cB$ extends to $\tau\otimes{\rm tr}_n\colon\mathcal B\otimes M_n(\mathbb C)\to\mathbb C$, where ${\rm tr}_n$ is the canonical normalized trace on $M_n(\mathbb C)$. Note also that if $X, Y\in\mathcal A$ are free, boolean independent, respectively c-free with respect to $E_\cB$ and $\phi:\mathcal A\to \cD$ for some algebra $D$ containing $\cB$, then so are $X\otimes 1_n$ and $Y\otimes 1_n$ with respect to $E_{M_n(\mathcal B)}$ and $\phi\otimes 1_n$.

We recall that $a\in\mathcal A$ is called selfadjoint if $a=a^\ast$. Any element $a\in\mathcal A$ in a $*$-algebra can be written uniquely $a=\Re a+i\Im a$, where $\Re a=(a+a^\ast)/2,$ $\Im a=(a-a^\ast)/(2i)$ are selfadjoint.
For any given $*$-algebra $\mathcal A$ on which a notion of
positivity coherent with the star operation has been defined, we
shall denote the upper half-plane of $\mathcal A$ by
$$
\mathbb H^+(\mathcal A)=\{a\in\mathcal A\colon\Im a>0\}
,$$
and $\mathbb H^-(\mathcal A)=-\mathbb H^+(\mathcal A)$. (Thus,
$\mathbb H^+(\ncspace{\cA}) = \coprod_{n=1}^\infty\mathbb H^+(M_n(\mathcal A));$ see also \cite{V1}.)
We note that $a\in\mathbb H^+(\mathcal A)\implies a^\ast\in\mathbb H^-(\mathcal A)$.

A useful generalization, noted by Voiculescu, of the fact that the operation of taking inverse changes the imaginary part of a complex number from positive to negative and vice-versa is the following implication, which holds in any unital $*$-algebra in which analytic functional calculus is available:
\begin{equation}\label{inverse}
a\in\mathbb H^+(\mathcal A)\implies a^{-1}\in\mathbb H^-(\mathcal A).
\end{equation}
(Note that the invertibility of $a$ is part of the statement.) Indeed, by writing $u=\Re a,v=\Im a$, and $a=u+iv$, we have
$$
a=u+iv=u+i(\sqrt{v})^2=\sqrt{v}[(\sqrt{v})^{-1}u(\sqrt{v})^{-1}+i]\sqrt{v}.
$$
The ability to take square root is guaranteed by the analytic functional calculus and the fact that $v>\varepsilon 1_\mathcal A$ for some $\varepsilon>0$. As $(\sqrt{v})^{-1}u(\sqrt{v})^{-1}$ is selfadjoint, it is clear that $i$ does not belong to its spectrum, so
$(\sqrt{v})^{-1}u(\sqrt{v})^{-1}+i$ is invertible in $\mathcal A$.
Invertibility of $\sqrt{v}$, again guaranteed by its strict positivity and the existence of analytic functional calculus, implies that $a$ is itself invertible in $\mathcal A$. Now writing its inverse gives
\begin{eqnarray*}
a^{-1} & = & \left(\sqrt{v}[(\sqrt{v})^{-1}u(\sqrt{v})^{-1}+i]\sqrt{v}\right)^{-1}\\
& = & (\sqrt{v})^{-1}[(\sqrt{v})^{-1}u(\sqrt{v})^{-1}+i]^{-1}(\sqrt{v})^{-1}\\
& = & (\sqrt{v})^{-1}\left[[(\sqrt{v})^{-1}u(\sqrt{v})^{-1}]^2+1\right]^{-1}[(\sqrt{v})^{-1}u(\sqrt{v})^{-1}-i](\sqrt{v})^{-1}\\
& = & \underbrace{(\sqrt{v})^{-1}\left[[(\sqrt{v})^{-1}u(\sqrt{v})^{-1}]^2+1\right]^{-1}[(\sqrt{v})^{-1}u(\sqrt{v})^{-1}](\sqrt{v})^{-1}}_{=c}\\
& & +i\underbrace{\left\{-(\sqrt{v})^{-1}\left[[(\sqrt{v})^{-1}u(\sqrt{v})^{-1}]^2+1\right]^{-1}(\sqrt{v})^{-1}\right\}}_{=d}.
\end{eqnarray*}
Analytic functional calculus guarantees that $c=c^*,d=d^*$ and $-d>0$. The uniqueness of the expansion into real and imaginary part guarantees that $c=\Re(a^{-1})$, $d=\Im(a^{-1})$, so our claim is proved.

We indicate next how an operator-valued distribution can be encoded by noncommutative functions.
Note that if $\mu\in\Sigma_{\cB:\mathcal D}$, then  $(\mu\otimes 1_n)\in\Sigma_{M_n(\cB):M_n(\mathcal D)}$. Moreover, (see \cite{ncfound}, \cite{V1}) for

    \[
 b=\left[
 \begin{array}{ccccc}
 0&b_1&0&\dots&0\\
 0&0&b_2&\dots&0\\
 \dots&\dots&\dots&\dots&\dots\\
 0&0&0&\dots&b_n\\
 0&0&0&\dots&0
\end{array}
 \right]
 \in M_{n+1}(\cB)
 \]
 we have that
\begin{equation}\label{momente}
  ( \mu\otimes 1_{n+1})\bigl([\X b]^n)=\left[
\begin{array}{cccc}
0&\dots&0&\mu(\X b_1 \X b_2\dots \X b_n)\\
0&\dots&0&0\\
\dots&\dots&\dots&\dots\\
0&\dots&0&0
\end{array}
\right],
\end{equation}
so $\mu$ is completely characterized by the sequence $\bigl\{({\mu}\otimes\id_{n}) \bigl([\X\cdot b]^m\bigl)\bigr\}_{m, n}$.

Using this observation, we shall indicate how all the information describing a distribution can be encapsulated in a fully matricial (or noncommutative) function. For a given $\mu\in\Sbd$, we define its {\em moment-generating series} as the non-commutative function of components
\begin{eqnarray*}
M^{(n)}_\mu(b)=\sum_{k=0}^\infty(\mu\otimes 1_n)([\X \cdot b]^k)
&=&
1_n+(\mu\otimes 1_n)(\X\cdot b)\\
&&\hspace{-1.5cm}
+(\mu\otimes 1_n)(\X\cdot  b\cdot\X\cdot  b)+\cdots.
\end{eqnarray*}

 As shown in \cite{popa-vinnikov}, $M_\mu$ is always well-defined on $\Nilp(\cB)$. Moreover, from Engel's Theorem, $a\in\Nilp(M_n(\cB))$ if and only if $TaT^{-1}$ is strictly upper
triangular for some $T\in GL(n)$, therefore $T[M_\mu(b)-\mathds{1}]T^{-1}$ is also upper-triangular hence $M_\mu(b)-\mathds{1}\in\Nilp(\cD)$. As in \cite{popa-vinnikov}, we used the symbol $\mathds{1}$ for $\id_n$ on each component from $M_n(\cB)$. If $\mu\in\Sbd^0$, then $M_\mu$ is also well-defined on a small non-commutative ball from $\ncspace{\cB}$ which is mapped in a non-commutative ball from $\ncspace{\cD}$.

 For $\nu\in\Sb$ and $\mu\in\Sbd$ we define their $R$-, respectively {\em $B$-transforms} via the functional equations
 \begin{eqnarray}
 M_\nu(b)-\mathds{1}&=&R_\nu\left(b\cdot M_\nu(b)\right)\label{rtransform}\\
 M_\mu(b)-\mathds{1}&=&B_\mu(b)\cdot M_\mu(b)\label{btransform}
 \end{eqnarray}
 In \cite{popa-vinnikov} is is shown that the $R$- and $B$-transforms are non-commutative functions well-defined on $\Nilp(\cB)$. If $\nu\in\Sb^0$, respectively $\mu\in\Sbd^0$, then $R_\nu$ and $B_\mu$ are also well-defined in some non-commutative balls from $\ncspace{\cB}$.

The main reason for which we have introduced the $R$ and $B$-transforms is their linearizing property: we have
\begin{equation}\label{R-transform}
R_{\mu\boxplus\nu}(b)=R_\mu(b)+R_\nu(b),\quad \mu,\nu\in\Sigma_\mathcal B,
\end{equation}
\begin{equation}\label{B-transform}
B_{\mu\uplus\nu}(b)=B_\mu(b)+B_\nu(b),\quad \mu,\nu\in\Sigma_{\mathcal B:\mathcal D}.
\end{equation}
The first result is due to Voiculescu \cite{V*}, and the second to Popa \cite{mv-bool}.

We warn the reader that the other version of the $R$-transform,
defined below, namely the original one of Voiculescu, as well as
the one used by Dykema (that we will call here $\mathcal R$), is related to this version by a simple
multiplication to the right with the variable $b$:
$R_\mu(b)=\mathcal R_\mu(b)b$.

Depending on the functional context, sometimes it is convenient to use slight variations of these transforms, which benefit of similar properties. We start with the most straightforward: following \cite{P}, we introduce the {\em shifted moment generating function} $\mathfrak H_\mu$ of $\mu\in\Sbd$ as the non-commutative function of components
\[
\mathfrak H^{(n)}_\mu(b) = b\cdot M^{(n)}_\mu(b).
\]
When there is no risk of confusion, we will denote $\mathfrak H^{(1)}_\mu$ also by $\mathfrak H_\mu$.

For a better understanding of the way noncommutative functions generalize the classical functions associated to probability distributions, it will be convenient to express the transform $\mathfrak H_\mu$ in terms of the {\em generalized resolvent} or {\em operator-valued Cauchy transform}. Several properties of scalar-valued Cauchy transforms (easily proved, but available also in \cite[Chapter III]{akhieser}) are preserved when we pass to the operator-valued context. We shall express the operator-valued Cauchy transform first in terms of random variables. Suppose that $\cB\subset\mathcal A$, $\cB\subset\cD$ are inclusions of unital C$^\ast$-algebras and $\phi:\cA\to\cD$ is a completely positive $\cB$-bimodule map.  For any fixed $X=X^*\in \mathcal A$ we let $\mathcal G_X=(G_X^{(n)})_n$,
\begin{align*}
G_X^{(n)}&\colon\mathbb H^+(M_n(\mathcal B))\to\mathbb H^-(M_n(\mathcal D)),\\
G_X^{(n)}(b)&=\phi_n[(b-X\otimes \id_n)^{-1}],
\end{align*}
where $\phi_n$ is the extension of $\phi$ from $M_n(\mathcal A)$ to $M_n(\mathcal D)$. For $n=1$ we shall denote $G_X^{(1)}(b)$ simply by $G_X(b)$. Let us first remark that this expression makes sense: indeed, since $\Im b>0$ and $X=X^\ast$, it follows that $\Im(b-X\otimes \id_n)>0$, so, as noted in \eqref{inverse}, $b-X\otimes \id_n$ is invertible in $\mathcal A$ and its inverse has  strictly negative imaginary part. Since $\phi$ is completely positive, it follows in addition that $G_X^{(n)}$ takes values in the lower matricial half-plane.

 We observe that, whenever $\|b^{-1}\|<1/\|X\|$, we can write $G_X(b)=\sum_{n=0}^\infty b^{-1}\phi[(Xb^{-1})^n]$ as a convergent series. Thus, it follows easily that for $\mu\in\Sbd^0$ we shall write
 \[
 G^{(n)}_\mu(b)=\sum_{n=0}^\infty (\mu\otimes\id_n)(b^{-1}(\mathcal X\cdot b^{-1})^n)
 =(\mu\otimes\id_n)[(b-\mathcal X)^{-1}];
  \]
  (of course, these equalities require that we consider an extension of $\mu$ to $\mathcal B\langle\langle\mathcal X\rangle\rangle$, the algebra of formal power series generated freely by $\mathcal B$ and the selfadjoint symbol $\mathcal X$). This also indicates a very important equality, namely, for $\mu\in\Sbd^0$
\begin{equation}\label{G-frakH}
{\mathcal G}_\mu(b^{-1})=\mathfrak H_\mu(b),\quad b\in\mathbb H^+(M_n(\cB)).
\end{equation}
Moreover, ${\mathcal G}_\mu(b^*)=[{\mathcal G}_\mu(b)]^*$ extends ${\mathcal G}_\mu$ to the lower half-planes, analytically through points $b$ with inverse of small norm.

It has been shown by Voiculescu \cite{V1} that
$\mathbb H(\ncspace{\cB})=\coprod_{n=1}^\infty\mathbb H^+(M_n(\cB))$ and ${\mathcal G}_X$
with the structures defined above are fully matricial sets
and functions. It is easy to observe that the same is true for $\mathcal F_X$, the reciprocals of ${\mathcal G}_X$, namely
\[ F_X(b)=[G_X(b)]^{-1}, \hspace{.6cm}
F_X^{(n)}(b)=[G_X^{(n)}(b)]^{-1}.\]
\begin{remark}\label{about-F}
When $X$ is a $\mathcal B$-valued selfadjoint random variable, the transform $\mathcal F$ has many properties in common with its scalar-valued analogue. First of all, it follows straightforwardly from the similar property of $\mathcal G$ that $F^{(n)}_X$ necessarily maps $\mathbb H^+(M_n(\cB))$ into itself. Moreover, under the condition that $\cB$ has a rich enough collection of positive linear functionals (for example if it is a $C^\ast$-algebra) we always have $\Im F^{(n)}_X(b)\ge\Im b$ for all $b\in\mathbb H^+(M_n(\cB))$. Indeed, for $n=1$, for any fixed positive linear functional $f$ on $\mathcal B$, let us define $F_{X,f}(z)=f(F_X(\Re b+z\Im b))$. Clearly, since $\Im b>0$, we have that $F_{X,f}\colon\mathbb H^+(\mathbb C)\to\mathbb H^+(\mathbb C)$. In addition,
\begin{eqnarray*}
\lim_{y\to+\infty}\frac{F_{X,f}(iy)}{iy} & = &
\lim_{y\to+\infty}f\left(\left[\sum_{n=0}^\infty(\Im b)^{-1}
\phi\left[\frac{1}{(iy)^n}((X-\Re b)(\Im b)^{-1})^n\right]
\right]^{-1}\right)\\
& = & f\left(\Im b\right)>0.
\end{eqnarray*}
Thus \cite[Chapter III]{akhieser} there exists a positive compactly supported Borel measure $\rho$ on the real line of mass $1/f\left((\Im b)\right)$ so that $F_{X,f}(z)=\frac{1}{G_\rho(z)}=\left[\int_\mathbb R\frac{1}{z-t}\,d\rho(t)\right]^{-1}$ for all $z$ in the upper half-plane. The Nevanlinna representation of $F_{X,f}$ implies that
$\Im F_{X,f}(z)\ge f\left(\Im b\right)\Im z$ for all  $z\in\mathbb H^+(\mathbb C)$, and so, since this holds for all positive linear functionals $f$ on $\mathcal B$, $\Im F_X(b)\ge\Im b$ for all $b\in\mathbb H^+(\mathcal B)$. Moreover, we note that if equality holds for a given $b_0$, then $X-\Re b_0$ must be a multiple of the identity of $\mathcal B.$ The argument for a general component $F^{(n)}_X$ is analogous.
\end{remark}

We would like also to mention the connection between $\mathcal F$ and $B$:
\begin{equation}\label{F-B}
\mathds{1}-\mathcal F_\mu(b^{-1})b=B_\mu(b),\quad b^{-1}\in\mathbb H^+(\ncspace{\cB}).
\end{equation}

For classical measures it is known that weak convergence to a finite measure is equivalent to uniform convergence on compact sets for the Cauchy transforms to the Cauchy transform of the limit, and, if all measures involved are compactly supported, these two statements are equivalent to the convergence of moments (we say $\sigma_n\to\sigma$ in moments if $\int t^jd\sigma_n(t)$ converges to $\int t^jd\sigma(t)$ for any $j\in\mathbb N$.) We shall provide below two versions of this result for operator-valued distributions.

First, let us define convergence in moments for an operator-valued distribution.
\begin{defn}
Given a sequence of distributions $\mu_n\in\Sigma_{\mathcal B:\mathcal D}$, we say that
\begin{itemize}
\item[(a)] $\mu_n$ converges to $\mu\in\Sigma_{\mathcal B:\mathcal D}$
{\em pointwise} in moments if for any $\varphi\in\mathcal D^*$ we have
\[
\lim_{n\to\infty}\mu_n(\mathcal Xb_1\mathcal Xb_2\cdots b_j\mathcal X)=\mu(\mathcal Xb_1\mathcal Xb_2\cdots b_j\mathcal X),
\]
for all $b_1,\dots,b_j\in\mathcal B$;
\item[(b)]$\mu_n$ {\em norm}-converges to $\mu\in\Sigma_{\mathcal B:\mathcal D}$ in moments if
$$
\lim_{n\to\infty}\sup_{\|b_k\|=1,1\le k\le j}\|\mu_n(\mathcal Xb_1\mathcal Xb_2\cdots\mathcal Xb_j\mathcal X)-\mu(\mathcal Xb_1\mathcal Xb_2\cdots\mathcal Xb_j\mathcal X)\|=0,
$$
for all $j\in\mathbb N$.
\end{itemize}
\end{defn}
Remark first that for finite dimensional algebra $\mathcal B$ the two notions are equivalent. Also, remark that condition [(b)] is equivalent to
\begin{itemize}
\item[(b$^\prime$)] $\displaystyle \lim_{n\to\infty}M_{\mu_n}(b)=M_{\mu}(b),$ \ for all $b\in\Nilp(\cB)$.
\end{itemize}
In this paper we will mainly be interested in norm-convergence of moments.

We note next the following simple remark:
\begin{remark}\label{pointwise}
Assume that $\mathcal B\subseteq\mathcal D$ is a unital inclusion of von Neumann algebras and $\{\mu_n\}_{n\in\mathbb N}$ is sequence from $\Sbd^0$  Then $\mu_n$ converges pointwise in moments to $\mu\in\Sbd^0$  if and only if $\mathcal G_{\mu_n}$ converges pointwise to $\mathcal G_\mu$ on $\mathbb H^+(\ncspace{\cB})$.
\end{remark}

\begin{proof}
First, let us assume that $\mu_n$ converges to $\mu$ pointwise in moments. It suffices to prove that for any state $\varphi$ on $M_m(\cD)$ we have that
\[
\lim_{n\lra\infty}\varphi\bigl(G_{\mu_n}^{(m)}(b)\bigr)=\varphi\bigl(G_{\mu}^{(m)}(b)\bigr).
\]
Let $\varphi$ as above and $b\in \mathbb{H}^{+}(M_m(\cB))$. The function $\mathbb{H}^+(\mathbb{C})\ni z\mapsto\varphi\bigl(G^{(m)}_{\mu_n}(\Re b +z\Im b)\bigr)$ is the Cauchy transform of some positive real measure $\sigma_{n,\varphi,b}$. Indeed,
\[
\varphi(G^{(m)}_{\mu_n}(\Re b+z\Im b))=\varphi((\mu_n\otimes1_m)\left[(\Re b+z\Im b-\mathcal X)^{-1}\right]),
\]
 and since $\varphi,\mu_n\otimes1_m,\Im b$ and $\Im z$ are all positive, it follows that $\Im\varphi(G^{(m)}_{\mu_n}(\Re b+z\Im b))<0$. Moreover,
\begin{eqnarray}
\lim_{z\to\infty}z(\mu_n\otimes1_m)\left[(\Re b+z\Im b-\X)^{-1}\right]&=&\nonumber\\
&&\hspace{-3cm}\lim_{z\to\infty}\sum_{j=0}^\infty(\Im b)^{-1}(\mu_n\otimes1_m)\left[\frac{((\Re b-\X)(\Im b)^{-1})^j}{z^j}\right]\label{eq5}\\
&&\hspace{-3cm}=(\Im b)^{-1}\nonumber
\end{eqnarray}
(the limit is in the norm topology of $M_m(\cD)$; these expressions make sense for $|z|$ large enough because of the exponential growth condition) Hence applying $\varphi$ and using that $\Im b>0$ gives the result.
 Thus, according to \cite{akhieser}, Chapter III, we have that
  \[
  \varphi(G^{(m)}_{\mu_n}(\Re b+z\Im b))=\int_\mathbb R(z-t)^{-1}\,d\sigma_{n,\varphi,b}(t),
  \]
   with $\sigma_{n,\varphi,b}(\mathbb R)=\varphi((\Im b)^{-1})$. We observe in addition that
   $$\int_\mathbb Rt^j\,d\sigma_{n,\varphi,b}(t)=\varphi\left((\mu_n\otimes1_m)\left[{(\Im b)^{-1}((\Re b-\mathcal X)(\Im b)^{-1})^j}\right]\right),$$
    so by our hypothesis and the continuity of the multiplication with the constant $(\Im b)^{-1}$ we obtain that the moments of $\sigma_{n,\varphi,b}(t)$ converge. Normality of the family $\{\varphi(G^{(m)}_{\mu_n}(\Re b+z\Im b))\}_n$ guarantees that a weak limit of this sequence of measures exists, and the limit has the prescribed moments. We conclude that $\varphi(G^{(m)}_{\mu_n}(\Re b+z\Im b))$ converges to $\varphi(G^{(m)}_{\mu}(\Re b+z\Im b))$ uniformly on compacts of $\mathbb H^+(\mathbb C)$ for any fixed $b\in\mathbb H^+(M_m(\cB))$.

 Assume next that $\lim_{n\to\infty}G^{(m)}_{\mu_n}(b))=G^{(m)}_\mu(b))$ for any $m$ and any $b\in\mathbb H^+(M_m(\cB))$. We define again $\sigma_{n,\varphi,b}$ as above and observe that $\varphi(G^{(m)}_{\mu_n}(\Re b+z\Im b))$ is the Cauchy transform of this measure and it converges uniformly on compacts of the complex upper half-plane to the Cauchy transform of a limit measure $\sigma_{\varphi,b}$. By applying $\varphi$ to equation \eqref{eq5} we obtain that $\sigma_{\varphi,b}$ has moments $\varphi\left((\mu\otimes1_m)\left[{(\Im b)^{-1}((\Re b-\mathcal X)(\Im b)^{-1})^j}\right]\right)$, so that
\begin{eqnarray*}
\lefteqn{\lim_{n\to\infty}\varphi\left((\Im b)^{-1}(\mu_n\otimes1_m)\left[(\Re b-\mathcal X)(\Im b)^{-1}(\Re b-\mathcal X)\cdots(\Re b-\mathcal X)\right](\Im b)^{-1}\right)=}\\
& & \varphi\left((\Im b)^{-1}(\mu\otimes1_m)\left[(\Re b-\mathcal X)(\Im b)^{-1}(\Re b-\mathcal X)\cdots(\Re b-\mathcal X)\right](\Im b)^{-1}\right),
\end{eqnarray*}
from which we obtain pointwise convergence for all symmetric moments of the fully matricial extensions of $\mu_n$ to $\mu$, hence for all moments of $\mu_n$ and $\mu$.
\end{proof}

\begin{defn}\label{marg-unif}
 A sequence $\{\mu_n\}_n$ from $\Sbd^0$ is said to be uniformly bounded (by $M$) if there exists some constant $M>0$ such that for all $n,p$ and all $b_1, \dots, b_p\in \cB$ we have that
 \[
 \| \mu_n(\X\cdot b_1\cdot \X\cdots b_p \cdot \X)\|<M^{p+1}\|b_1\|\cdots\|b_p\|.
 \]
\end{defn}
Note that, according to \cite{popa-vinnikov}, Proposition 1.2, the above condition holds true (with the same $M$) for $\mu_n\otimes1_m$ and $b_1,\dots,b_p\in M_m(\cB)$.

 For the next Proposition we shall find useful the following (purely Banach space) results, which can be found in the first chapters of \cite{isidro}(Theorems 1.5 and 1.6). Let $E$ and $E_1$ be complex Banach spaces, $D\subset E$ and $D_1\subset E_1$ be bounded domains. Following \cite{isidro}, we denote by $\mathrm{Hol}(D,D_1)$ the set of holomorphic mappings from $D$ into $D_1$, that is, functions $f$ for which $f(a+h)=\sum_{n=0}^\infty f^{(n}_a(h,\dots,h)$ on a neighborhood of $a$, for any $a$ in $D$; where
\[
f^{(n}_a(h_1,\dots,h_n)=\frac{1}{n!}\frac{\partial^n}{\partial t_1\cdots\partial t_n}f(a+t_1h_1+\dots+t_nh_n)
\]
is a continuous $n$-linear map from $E^n$ to $E_1$. We shall also denote $B\subset\subset D$ if $B$ is a subset of $D$ with the additional property that the norm distance from $B$ to $\partial D$ is strictly positive.

\begin{thm}\label{anal-conv}
Let $(f_j)_{j\in J}$ be a net in $\mathrm{Hol}(D,D_1)$ and $f\in\mathrm{Hol}(D,D_1)$, and $B\subset\subset D$ be a ball centered at $a\in D$. The following statements are equivalent:
\begin{enumerate}
\item The net $(f_j)_{j\in J}$ is uniformly convergent to $f$ on $B$;
\item For all $k\in\mathbb N$ we have $\lim_{j\in J}\|f^{(k}_{j,a}-f^{(k}_a\|=0.$
\end{enumerate}
\end{thm}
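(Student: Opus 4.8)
The plan is to reduce everything to the classical one-variable theory of power series, exploiting that on a ball $B = B(a,r) \subset\subset D$ a holomorphic map $f \in \mathrm{Hol}(D,D_1)$ is the sum of its Taylor series $f(a+h) = \sum_{k\ge 0} f^{(k}_a(h,\dots,h)$, which converges uniformly on $B(a,\rho)$ for each $\rho < r$ (this is the standard Cauchy-estimate statement one finds in \cite{isidro}, Theorems 1.5 and 1.6, which we are entitled to quote). The direction $(2)\Rightarrow(1)$ is the substantive one; the direction $(1)\Rightarrow(2)$ will follow from Cauchy-type integral formulas for the homogeneous terms.

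For $(1)\Rightarrow(2)$: fix $k$. Each homogeneous part $f^{(k}_a$ is recovered from $f$ by a contour-integral / finite-difference formula in the scalar parameter — concretely, $f^{(k}_{a}(h,\dots,h) = \frac{1}{2\pi i}\int_{|\zeta|=\rho'}\zeta^{-k-1} f(a+\zeta h)\,d\zeta$ for $\|h\|$ small and $\rho' < 1$ chosen so that $a + \zeta h$ stays in $B$. Since $f_j \to f$ uniformly on $B$, the integrands converge uniformly in $\zeta$ (uniformly in $h$ on the unit sphere of $E$), and passing the limit under the integral gives $\|f^{(k}_{j,a} - f^{(k}_a\| \to 0$, where the norm of a bounded symmetric $k$-linear map is its sup over the unit sphere.

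For $(2)\Rightarrow(1)$: the obstacle — and the reason one needs a hypothesis controlling all $k$ simultaneously rather than just termwise — is obtaining a \emph{uniform tail estimate}. Because $D_1$ is bounded, say $\|g\| \le C$ for all $g \in D_1$, the Cauchy estimates give a uniform bound $\|f^{(k}_{j,a}\| \le C/r^k$ for every $j$ and every $k$ (and likewise for $f$). Hence for any $\varepsilon>0$ one first picks $\rho < r$ and then $N$ so large that $\sum_{k\ge N} (C/r^k)\rho^k < \varepsilon/3$, which bounds the tails of \emph{both} $\sum_k f^{(k}_{j,a}(h,\dots,h)$ and $\sum_k f^{(k}_a(h,\dots,h)$ uniformly in $j$ and in $\|h\|\le\rho$. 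Then one uses hypothesis (2) to choose $j$ large enough that $\sum_{k<N}\|f^{(k}_{j,a}-f^{(k}_a\|\rho^k < \varepsilon/3$. Adding the three pieces yields $\sup_{\|h\|\le\rho}\|f_j(a+h)-f(a+h)\| < \varepsilon$, i.e. uniform convergence on $B(a,\rho)$; since $\rho<r$ was arbitrary and the statement is really about $B = B(a,r)$ with $r$ the radius for which $B\subset\subset D$, one finishes by noting the theorem is typically applied with the ball taken slightly inside, or by a standard compactness/exhaustion remark. The only real care needed is the bookkeeping that the Cauchy estimate constant $C/r^k$ is genuinely independent of $j$, which is where boundedness of $D_1$ is used decisively.
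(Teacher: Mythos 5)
Your argument is essentially correct, but note that the paper does not prove this statement at all: it is imported verbatim from Isidro--Stach\'o \cite{isidro} (their Theorems 1.5 and 1.6) as a ``purely Banach space'' tool, so there is no in-paper proof to compare against; your proposal supplies the standard argument one would find in that reference. Two points deserve tightening. First, you pass freely between the norm of the symmetric $k$-linear map $f^{(k}_{j,a}$ and the supremum of the associated homogeneous polynomial $h\mapsto f^{(k}_{j,a}(h,\dots,h)$ over the unit sphere; these differ by the polarization constant $k^k/k!$. For $(1)\Rightarrow(2)$ this is harmless since $k$ is fixed, but in $(2)\Rightarrow(1)$ you should run the tail estimate on the homogeneous polynomials directly (the Cauchy integral bound $\sup_{\|h\|\le\rho}\|f^{(k}_{j,a}(h,\dots,h)\|\le C(\rho/r')^k$ needs no polarization); otherwise the extra factor $k^k/k!\sim e^k$ would shrink the radius on which your geometric series converges. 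Second, your endgame only yields uniform convergence on $B(a,\rho)$ for each $\rho<r$, whereas the statement asserts it on $B$ itself, and the ``compactness/exhaustion remark'' you invoke is not available in infinite dimensions. The correct fix is already contained in your hypotheses: $B\subset\subset D$ means $B(a,r+\delta)\subset D$ for some $\delta>0$, so all Cauchy estimates can be taken at radius $r'=r+\delta$, making the ratio $r/r'<1$ and the tail bound uniform over all of $B$. With those two repairs the proof is complete.
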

\begin{thm}\label{indiferent-conv}
Let $(f_j)_{j\in J}$ be a net in $\mathrm{Hol}(D,D_1)$.  For any two balls $B_1, B_2\subset\subset D$ the following statements are equivalent:
\begin{enumerate}
\item $f_j\to f$ relative to $\|\cdot\|_{B_1}$;
\item $f_j\to f$ relative to $\|\cdot\|_{B_2}$.
\end{enumerate}
where $\|f\|_B=\sup_{x\in B}\|f(x)\|$.
\end{thm}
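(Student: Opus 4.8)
The plan is to reduce everything to Theorem~\ref{anal-conv} together with a soft connectedness argument. Throughout, $f$ denotes a fixed element of $\mathrm{Hol}(D,D_1)$, as in Theorem~\ref{anal-conv} (if $f$ is only given as a pointwise limit, one first checks it is holomorphic on $D$, by the argument below). By symmetry it suffices to prove that $\|\cdot\|_{B_1}$-convergence implies $\|\cdot\|_{B_2}$-convergence. The key observation is that condition (2) of Theorem~\ref{anal-conv} --- that $\lim_{j}\|f^{(k)}_{j,a}-f^{(k)}_a\|=0$ for all $k$ --- makes no reference to the radius of the ball $B$ centered at $a$. Hence if $f_j\to f$ uniformly on \emph{one} ball centered at $a$ and contained $\subset\subset D$, then $f_j\to f$ uniformly on \emph{every} ball centered at $a$ and contained $\subset\subset D$; in particular, writing $\delta_a:=\mathrm{dist}(a,\partial D)>0$, we get $f_j\to f$ uniformly on $B(a,r):=\{x:\|x-a\|<r\}$ for all $r<\delta_a$. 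In short, one is free to shrink or re-grow the radius of a ball of uniform convergence as long as one stays $\subset\subset D$.

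Now I would run a connectedness argument. Call $a\in D$ \emph{good} if $f_j\to f$ uniformly on some ball centered at $a$ and contained $\subset\subset D$, and let $U$ be the set of good points. The hypothesis says the center $a_1$ of $B_1$ is good, so $U\neq\emptyset$. If $a\in U$ and $\|a'-a\|<\delta_a$, then choosing $r$ with $\|a'-a\|<r<\delta_a$, the ball $B(a',\,r-\|a'-a\|)$ is centered at $a'$ and contained in $B(a,r)$ (hence $\subset\subset D$, and $f_j\to f$ uniformly on it); so $a'\in U$. Thus $U$ is open and $B(a,\delta_a)\subseteq U$ for each $a\in U$. Moreover $U$ is closed in $D$: if $a_0\in D\cap\overline{U}$, pick $a\in U$ with $\|a-a_0\|<\delta_{a_0}/2$; by the triangle inequality $\delta_a\ge\delta_{a_0}-\|a-a_0\|>\|a-a_0\|$, so $a_0\in B(a,\delta_a)\subseteq U$. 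Since $D$ is connected, $U=D$. Finally, for the given ball $B_2\subset\subset D$, centered at $a_2$: since $a_2\in U$, $f_j\to f$ uniformly on some ball centered at $a_2$ and contained $\subset\subset D$, and by the radius-freedom of the first paragraph $f_j\to f$ uniformly on $B_2$. Interchanging $B_1$ and $B_2$ gives the stated equivalence.

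The only non-formal ingredient is the radius-independence of condition (2) in Theorem~\ref{anal-conv}, which is exactly what makes the ``shrink, then re-grow'' step legitimate; the rest is elementary point-set topology (openness and closedness of $U$, connectedness of $D$), so I expect no real obstacle. Were Theorem~\ref{anal-conv} not available, the genuine difficulty would be a propagation-of-smallness estimate --- e.g.\ Hadamard's three-circles theorem applied to $z\mapsto(f_j-f)(a+zv)$ along complex lines, with the three-circles exponent chosen uniformly in the unit direction $v$ and $\sup_{D}\|f_j-f\|\le\mathrm{diam}(D_1)$ bounding the outermost circle --- or, equivalently, a Vitali-type normality theorem for the net $(f_j-f)$.
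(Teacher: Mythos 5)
Your argument is correct. Note, however, that the paper does not prove this statement at all: both Theorem~\ref{anal-conv} and Theorem~\ref{indiferent-conv} are quoted verbatim from the first chapter of Isidro--Stach\'o (Theorems 1.5 and 1.6 there), so there is no in-paper proof to compare against. What you have written is a sound, self-contained derivation of the second theorem from the first. The two ingredients are exactly right: (i) condition (2) of Theorem~\ref{anal-conv} depends only on the center $a$ and not on the radius, so uniform convergence on one ball centered at $a$ with $B\subset\subset D$ propagates to every such ball centered at $a$ (shrink to a sub-ball trivially, re-grow via (1)$\Leftrightarrow$(2)$\Leftrightarrow$(1)); and (ii) the set $U$ of good centers is nonempty, open, and closed in the domain $D$, hence all of $D$ by connectedness --- your distance estimates $\delta_a\ge\delta_{a_0}-\|a-a_0\|$ and the inclusion $B(a',r-\|a'-a\|)\subseteq B(a,r)$ are what make both halves work. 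The one hypothesis you should make explicit (and you do flag it) is that $f\in\mathrm{Hol}(D,D_1)$ globally, since otherwise $f^{(k}_{a}$ is undefined at centers outside $B_1$; in every application in this paper the limit function is a priori holomorphic on all of $D$, so this is harmless. Your closing remark is also apt: absent Theorem~\ref{anal-conv}, one would be reproving a Vitali-type propagation result via a three-circles or normal-families argument, which is essentially the content of the cited source.
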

We would like to emphasize that in Remark \ref{pointwise} we do NOT require that the sequence $\{\mu_n\}$ is uniformly bounded. However, in order to be able to prove the similar result for norm-convergence of moments in the most general C$^\ast$-algebraic context, we will have to require that.

\begin{prop}\label{continuity}
Assume $\mathcal B\subseteq\mathcal D$ is a unital inclusion of C$^\ast$-algebras and that the sequence $\{\mu_n\}_{n\in\mathbb N}\subset\Sbd^0$ is uniformly bounded. The following statements are equivalent:
\begin{enumerate}
\item $\{\mu_n\}_{n\in\mathbb N}$ norm-converges in moments to some $\mu\in\Sbd^0$;
\item for all positive integers $m$, $G^{(m)}_{\mu_n}$ converges uniformly to $G^{(m)}_\mu$ on balls in $\mathbb{H}^{+}(M_m(\cB))$ which lay at positive distance from $\partial\mathbb H^+(M_m(\cB))$.
\end{enumerate}
\end{prop}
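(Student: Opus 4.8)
The plan is to transfer everything to the noncommutative moment--generating series $M_{\mu_n}^{(m)}$ and the Cauchy transforms $G_{\mu_n}^{(m)}$, which are linked by the identity $\mathfrak H_\mu=b\cdot M_\mu$ and by the series representation of $G_\mu$ displayed just before \eqref{G-frakH}, and then to upgrade ``convergence of the Taylor coefficients at one point'' to ``uniform convergence on all compactly contained balls'' — and to move the latter around — using Theorems~\ref{anal-conv} and~\ref{indiferent-conv}. I set up some bookkeeping first: since the uniform bound $M$ also governs every amplification $\mu_n\otimes 1_m$ (remark after Definition~\ref{marg-unif}), one gets $\|(\mu_n\otimes 1_m)([\X b]^k)\|\le M^k\|b\|^k$, so each $M_{\mu_n}^{(m)}$ is holomorphic on $\{b\in M_m(\cB):\|b\|<1/M\}$ with $\|M_{\mu_n}^{(m)}(b)\|\le(1-M\|b\|)^{-1}$, \emph{uniformly in $n$ and $m$}; the same holds for the limit $\mu$, which inherits the bound $M$ in case $(1)$ and in any event lies in $\Sbd^0$. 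I then fix $\rho>0$ small enough that (a) all $M_{\mu_n}^{(m)}$ and $M_\mu^{(m)}$ are holomorphic and uniformly bounded on $\{b\in M_m(\cB):\|b\|<2\rho\}$, and (b) the norm--convergent expansion $G_{\mu_n}^{(m)}(b)=\sum_{k\ge 0}b^{-1}(\mu_n\otimes 1_m)((\X b^{-1})^k)=b^{-1}M_{\mu_n}^{(m)}(b^{-1})$ holds whenever $\|b^{-1}\|<2\rho$ (this needs $\rho$ below the reciprocals of the uniformly bounded norms of the selfadjoint elements realizing $\mu_n$ and $\mu$, available by Proposition~1.2 of \cite{popa-vinnikov}). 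Finally I record that $\mathcal U_m:=\{b\in\mathbb H^+(M_m(\cB)):\|b^{-1}\|<\rho\}$ is a nonempty open subset of $\mathbb H^+(M_m(\cB))$ containing, e.g., a small ball about $iR\cdot 1_m$ for $R$ large, and that this ball lies at positive distance from $\partial\mathbb H^+(M_m(\cB))$; and that $\|G_{\mu_n}^{(m)}(b)\|\le\|(b-\X\otimes 1_m)^{-1}\|\le\|(\Im b)^{-1}\|$ for every $b\in\mathbb H^+(M_m(\cB))$ and every $n$, by contractivity of unital completely positive maps together with the computation behind \eqref{inverse}.

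For $(1)\Rightarrow(2)$: a routine manipulation with matrix entries and polarization identifies norm--convergence in moments with the statement that, for every $m$ and every $k$, the $k$-th Taylor coefficient of $M_{\mu_n}^{(m)}$ at $0$ converges in operator norm to that of $M_\mu^{(m)}$. By Theorem~\ref{anal-conv}, applied on the bounded domain $\{b\in M_m(\cB):\|b\|<2\rho\}$ with base point $0$, this is equivalent to $M_{\mu_n}^{(m)}\to M_\mu^{(m)}$ uniformly on $\{\|b\|\le\rho\}$; multiplying by $b^{-1}$ and using the expansion (b), $G_{\mu_n}^{(m)}\to G_\mu^{(m)}$ uniformly on $\{b:\|b^{-1}\|\le\rho\}$, in particular on a ball $B_0\subset\mathcal U_m$. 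Now let $B$ be any ball lying at positive distance from $\partial\mathbb H^+(M_m(\cB))$; since $\mathbb H^+(M_m(\cB))$ is convex there is a bounded connected open $D'$ with $B_0,B\subset\subset D'\subset\mathbb H^+(M_m(\cB))$, on which all $G_{\mu_n}^{(m)}$ are bounded by $\sup_{b\in D'}\|(\Im b)^{-1}\|<\infty$; Theorem~\ref{indiferent-conv} then carries uniform convergence from $B_0$ to $B$.

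For $(2)\Rightarrow(1)$: evaluating $(2)$ on $B_0\subset\mathcal U_m$ and using $M_{\mu_n}^{(m)}(b^{-1})=b\cdot G_{\mu_n}^{(m)}(b)$ there, we get $M_{\mu_n}^{(m)}\to M_\mu^{(m)}$ uniformly on $\{b^{-1}:b\in B_0\}$, a set which contains a ball $B_1\subset\subset\{b\in M_m(\cB):\|b\|<2\rho\}$ because $w\mapsto w^{-1}$ is a biholomorphism of the invertible elements, hence an open map. The $M_{\mu_n}^{(m)}$ being uniformly bounded on $\{\|b\|<2\rho\}$, Theorem~\ref{indiferent-conv} moves the uniform convergence from $B_1$ to a ball about $0$, and Theorem~\ref{anal-conv} then yields convergence of every Taylor coefficient of $M_{\mu_n}^{(m)}$ at $0$, which — again by the entry/polarization identification — is precisely norm--convergence in moments.

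The step I expect to be the real obstacle is the interface between the ``$M$-picture'' near $0$ and the ``$G$-picture'' near infinity in the half-plane: Theorems~\ref{anal-conv} and~\ref{indiferent-conv} are stated only for \emph{bounded} domains, whereas $\mathbb H^+(M_m(\cB))$ is unbounded, so one must interpose bounded subdomains and check — fortunately automatically, via $\|(b-X)^{-1}\|\le\|(\Im b)^{-1}\|$ — that the $G_{\mu_n}^{(m)}$ are uniformly bounded on them; and it is exactly the uniform boundedness hypothesis on $\{\mu_n\}$ that produces the common ball of analyticity $\{\|b\|<2\rho\}$ for all the $M_{\mu_n}^{(m)}$, without which the passage through Theorem~\ref{anal-conv} would have no fixed domain on which to operate.
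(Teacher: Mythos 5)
Your proposal is correct and follows essentially the same route as the paper: identify norm-convergence of moments with operator-norm convergence of the Taylor coefficients of the moment-generating series (via the symmetrization formula and the matrix trick \eqref{momente}), invoke Theorems~\ref{anal-conv} and~\ref{indiferent-conv} on suitably chosen bounded subdomains to pass between coefficient convergence at a point and uniform convergence on compactly contained balls, and use $b\mapsto b^{-1}$ together with $\mathcal G_\mu(b^{-1})=\mathfrak H_\mu(b)$ to shuttle between the $M$-picture near $0$ and the $G$-picture in the half-plane. The only (immaterial) difference is in $(1)\Rightarrow(2)$, where the paper expands $G_{\mu_n}$ directly at the point $Q=(1+2M)i$ and controls the tails by hand using the uniform bound, whereas you reuse the inversion mechanism symmetrically with the other implication.
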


\begin{proof} 
 Let us denote by $M$ a common bound for $\{\mu_n\}_n$ and $\mu$ as in Definition \ref{marg-unif}.
 
 For (2)$\Rightarrow$(1), note first that for all $m$, $\mathfrak{H}^{(m)}_\mu$ is well-defined in $B_{\frac{1}{2M}}(0)$, the ball of center zero and radius $\frac{1}{2M}$ from $M_m(\cB)$. Moreover, $\mathfrak{H}^{(m)}_\mu\in\mathrm{Hol}(B_{\frac{1}{2M}}(0), M_m(\cD))$ and
\begin{equation}\label{symm}
\mathfrak{H}_{\mu,0}^{(m)(k}(h_1,\dots,h_k)=\frac{1}{k!}\sum_{\sigma\in S_k}(\mu\otimes1_m)\left(
h_{\sigma(1)}\X h_{\sigma(2)}\cdots \X h_{\sigma(k)}
\right)
\end{equation}
where $S_k$ denotes the symmetric group with $k$ elements.

Fix now $b\in\mathbb{H}^{-}(M_m(\cB))$ with $\|b\|<\frac{1}{2M}$. Then there exist some small $R>0$ such that $B_R(b^{-1})\subset\subset\mathbb{H}^{+}(M_m(\cB))$. Since $\mathcal{G}_\mu(h^{-1})=\mathfrak{H}_\mu(h)$, it follows that there exists some $r>0$ such that $B_r(b)\subset\subset B_{\frac{1}{2M}}(0)$ and $\{\mathfrak{H}_{\mu_n}\}_n$ converges uniformly to $\mathfrak{H}_\mu$ on $B_r(b)$. Applying Theorem \ref{indiferent-conv}, we have that $\{\mathfrak{H}_{\mu_n}\}_n$ converges uniformly to $\mathfrak{H}_\mu$ on $B_{\frac{1}{4M}}(0)$, hence
\[
\|\mathfrak{H}^{(m),(k}_{\mu_n,0}-\mathfrak{H}^{(m),(k}_{\mu,0}\|\lra 0
\]
as $k$-linear maps from $M_m(cB)^k$ to $M_m(\cD)$. But (\ref{symm}) gives 
\[
\mathfrak{H}^{(m),(k}_{\mu,0}(h,\dots,h)=(\mu\otimes1_m)\left(h\X h\cdots \X h\right)
\]
and, since $m$ is arbitrary, equation \eqref{momente} allows us to conclude.

For (1)$\Rightarrow$(2), we will use the result from \cite{popa-vinnikov}, Proposition 1.2 namely that if $\mu\in\Sbd^0$ then there exist a C$^\ast$-algebra $\cA$ containing $\cB$, some selfadjoint $X\in\cA$ and a unital completely positive $\cB$-bimodule map $\phi:\cA\lra\cD$ such that for all noncommutative polynomials $f$ with coefficients in $\cB$ we have that $\mu(f(\X))=\phi(f(X))$.

Fix now $b_0\in\mathbb{H}^{+}(M_m(\cB))$. Since $\Im(X-b_0)=-\Im(b_0)<0$, we have that $X-b_0$ is invertible in $M_m(\cA)$ and
\begin{eqnarray}
\|(X-b_0)^{-1}\| & = & \|(i\Im b_0+\Re b_0-X)^{-1}\|\label{maj1}\\
&&\hspace{-1.5cm}= \|(\Im b_0)^{-1/2}(i+(\Im b_0)^{-1/2}(\Re b_0-X)(\Im b_0)^{-1/2})^{-1}(\Im b_0)^{-1/2}\|\nonumber\\
& \leq & \|(\Im b_0)^{-1}\|\|(i+(\Im b_0)^{-1/2}(\Re b_0-X)(\Im b_0)^{-1/2})^{-1}\|\nonumber\\
& \leq & \|(\Im b_0)^{-1}\|.\nonumber
\end{eqnarray}
(We have used here the fact that $M_m(\cB)$ is a C${}^*$-algebra, the fact that $\Im b_0$ is selfadjoint, as well as the fact that $i+(\Im b_0)^{-1/2}(\Re b_0-X)(\Im b_0)^{-1/2}$ is normal, so that one can apply continuous functional calculus to it.) Note that the above majorization is independent of $X$.

Also, for $\|h\|<\frac{1}{\|\Im{b_0}^{-1}\|}$, we have 
\begin{eqnarray*}
(b_0+h-X)^{-1}&=&[h+(b_0-X)]^{-1}\\
&=&(X-b_0)^{-1}[h(X-b_0)^{-1}-1_m]^{-1}\\
&=&(b_0-X)^{-1}
\sum_{n=0}^\infty\left[h(X-b_0)^{-1}\right]^n.
\end{eqnarray*}

Since $\phi$ is unital and completely positive, we have that $\|\phi\|_{\rm cb}=\|\phi(1)\|=1$, hence we can apply $\phi$ to the power series development of $(b_0+h-X)^{-1}$. It follows that
\begin{eqnarray*}
G^{(m)}_\mu(b_0+h)&=&\phi_m\left((b_0+h-X)^{-1}\right)\\
&=&\sum_{n=0}^\infty \phi_m\left((b_0-X)^{-1}[h(X-b_0)^{-1}]^n\right),
\end{eqnarray*}
hence
\begin{eqnarray*}
\|G_\mu(b)\| 
& = & \|\phi[(b-X)^{-1}]\|\\
& \leq & \sum_{n=0}^\infty\left\|\phi\left[(X-b_0)^{-1}\left[(b-b_0)(X-b_0)^{-1}\right]^n\right]\right\|\\
& \leq &\sum_{n=0}^\infty\|\phi\|_{\rm cb}\|(X-b_0)^{-1}\|^{n+1}\|b-b_0\|^n\\
& \le & \frac{\|(\Im b_0)^{-1}\|}{1-\|(\Im b_0)^{-1}\|\|b-b_0\|}.
\end{eqnarray*}
Also,
\[
G_{\mu,b_0}^{(m),(k}=\frac{1}{k!}\sum_{\sigma\in S_k}
\phi_m\left(
(b_0-X)^{-1}h_{\sigma(1)}(X-b_0)^{-1}h_{\sigma(2)}\cdots h_{\sigma(k)}(X-b_0)^{-1}
\right)
\]
 
\noindent By the above, it is easy to observe that each of these $k$-linear functionals is bounded in norm by $(k+1)!(\|(\Im b)^{-1}\|^k+\|(\Im b)^{-1}\|^k\|\Re b\|)$.

To simplify the notations, we will prove (2) for $m=1$; for an arbitrary $m$ all the computations are similar, using the matricial extensions of $\mu$ and $\{\mu_n\}_n$.

 We shall prove that there exists a point, namely $Q=(1+2M)i$, around which there exists a ball of radius $1/2$ on which $G_{\mu_n}$ converges to $G_\mu$ uniformly in norm. Then we shall use Theorem \ref{indiferent-conv} to argue that this implies uniform convergence on any ball $B\subset\subset\mathbb H^+(\cB)$ - the result will be proved by using Theorem \ref{anal-conv}. Indeed, let us start by observing that Theorems \ref{indiferent-conv} and \ref{anal-conv} indeed apply to our functions whenever we restrict them to $\mathbb H^+(\cB)+ic$ for any $c>0$. For a fixed pozitive integer $k$ we have that
\begin{eqnarray*}
\lefteqn{\mu\left[(Q-\mathcal X)^{-1}h_1(Q-\mathcal X)^{-1}\cdots h_k(Q-\mathcal X)^{-1}\right]=}\\
& & \frac1{Q^{k+1}}\mu
\left[\left(1-\frac{\mathcal X}{Q}\right)^{-1}h_1\left(1-\frac{\mathcal X}{Q}\right)^{-1}\cdots h_k\left(1-\frac{\mathcal X}{Q}\right)^{-1}\right]\\
& = & 
\frac1{Q^{k+1}}
\mu
\left[\sum_{m_1,\dots,m_{k+1}=0}^\infty\frac{\mathcal X^{m_1}}{Q^{m_1}}h_1
\frac{\mathcal X^{m_2}}{Q^{m_2}}\cdots h_k\frac{\mathcal X^{m_{k+1}}}{Q^{m_{k+1}}}\right]\\
& = & 
\frac1{Q^{k+1}}\mu
\left[\sum_{m_1,\dots,m_{k+1}=0}^\infty\frac{1}{Q^{m_1+m_2+\cdots+m_{k+1}}}
{\mathcal X^{m_1}}h_1{\mathcal X^{m_2}}\cdots h_k{\mathcal X^{m_{k+1}}}\right]\\
& = &
 \mu\left[\sum_{q=0}^\infty\frac1{Q^{k+1+q}}\sum_{m_1+\dots+m_{k+1}=q}{\mathcal X^{m_1}}h_1{\mathcal X^{m_2}}\cdots h_k{\mathcal X^{m_{k+1}}}\right]\\
& = & 
\sum_{q=0}^\infty\frac1{Q^{k+1+q}}\sum_{m_1+\dots+m_{k+1}=q}\mu
\left[{\mathcal X^{m_1}}h_1{\mathcal X^{m_2}}\cdots h_k{\mathcal X^{m_{k+1}}}\right].
\end{eqnarray*}
We note that the majorization
\begin{eqnarray*}
\|\mu\left[{\mathcal X^{m_1}}h_1{\mathcal X^{m_2}}\cdots h_k{\mathcal X^{m_{k+1}}}\right]\| & \leq & M^{m_1+m_2+\cdots+m_{k+1}}\|h_1\|\cdots\|h_k\|\\
& = & M^q\|h_1\|\cdots\|h_k\|
\end{eqnarray*}
guarantees that the last sum above is majorized in norm by $M^q(q+k)^k$; since $|Q|=1+2M,$ the convergence of this expression is not a problem. Note that all the above estimates hold true also for $\{\mu_n\}_n$. We claim that
$$
\lim_{n\to\infty}\left\|\mu_n\left[(Q-\mathcal X)^{-1}h_1(Q-\mathcal X)^{-1}\cdots h_k(Q-\mathcal X)^{-1}\right]\right.
$$
$$\left.-\mu\left[(Q-\mathcal X)^{-1}h_1(Q-\mathcal X)^{-1}\cdots h_k(Q-\mathcal X)^{-1}\right]\right\|=0.
$$
Indeed, let $\varepsilon>0$ be fixed. By the choice of $Q$ (and the norm convergence of the last series from above)
it follows that there exists a positive integer $q(\varepsilon,Q)$ {\em not depending on }$n$, so that
$$
\left\|\sum_{q=r}^\infty\frac1{|Q|^{k+1+q}}\sum_{m_1+\dots+m_{k+1}=q}\mu_n\left[{\mathcal X^{m_1}}h_1{\mathcal X^{m_2}}\cdots h_k{\mathcal X^{m_{k+1}}}\right]\right\|<\frac\varepsilon8\|h_1\|\cdots\|h_k\|
$$
for any $r\ge q(\varepsilon,Q).$ Fix $r=2+q(\varepsilon,Q)$, and observe that by the norm-convergence of the moments of $\mu_n$ to the moments of $\mu$, we can find $N_\varepsilon\in\mathbb N$ so that for any $n\ge N_\varepsilon$ we have that whenever $\|h_1\|,\dots,\|h_k\|\le1$.
$$
\sum_{q=0}^r\sum_{m_1+\dots+m_{k+1}=q}\left\|\mu_n\left[{\mathcal X^{m_1}}h_1{\mathcal X^{m_2}}\cdots h_k{\mathcal X^{m_{k+1}}}\right]-\mu\left[{\mathcal X^{m_1}}h_1{\mathcal X^{m_2}}\cdots h_k{\mathcal X^{m_{k+1}}}\right]\right\|<\frac\varepsilon2
$$

This proves that $\lim_{n\to\infty}\|G_{\mu_n}^{(k}(Q)-G_\mu^{(k}(Q)\|=0$, as $k$-linear operators from $\mathcal B$ to $\mathcal D$. Since $k$ is arbitrary, the second condition of Theorem \ref{anal-conv} is satisfied, so $G_{\mu_n}$ converges locally uniformly in the norm topology of $\mathcal D$ to $G_\mu$, as claimed.

\end{proof}

The last two results have established the connection between transforms and distributions in the most general case that we will consider.

\begin{remark}\label{rmq8}

\item[(a)] Due to the relevance of this particular case, we emphasize again that if $\mathcal{B,D}$ are finite dimensional, then the sequence $\{\mu_n\}_{n\in\mathbb N}$ from Proposition \ref{continuity} needs not be uniformly bounded. Indeed, generally local compactness of finite dimensional spaces makes proofs considerably simpler. Unfortunately, infinite dimensional Banach spaces are not locally compact in the norm topology, so in particular closed balls are not compact. However, some properties of analytic maps on finite dimensional spaces remain true, including the continuity of composition operation \cite[Theorem 1.10]{isidro}.

\noindent\item[(b)] Using Theorem \ref{indiferent-conv} and \cite[Theorem 1.10]{isidro}, we can replace the Cauchy transform $G$ in Proposition \ref{continuity} with any of the transforms $F$, $B$, $M$, $\mathfrak H$, or, if $\mathcal B=\mathcal D$, $R$.
\item[(c)] Not the same thing can be said about the lemma preceding it; in that case, in the most general context $G$ can only be replaced by $\mathfrak H$ or $M$. However, there are many special cases in which $G$ can be replaced by $F$ or $R$.

\end{remark}

We introduce a few other new transforms and modifications of the transforms already introduced which will be of use to us. We recall the $\mathcal R$-transform, which can be expressed as $\mathcal R_\mu(b)=G_\mu^{-1}(b)-b^{-1}$ for $b$ invertible and of small norm, and which linearizes free additive convolution. (We use the convention that the $-1$ as exponent on the letter which denotes a function means compositional inverse, while a $-1$ exponent on the function evaluated in a point means the multiplicative inverse of the value of the function in that point: thus, $f(b)f(b)^{-1}=1$, while $f(f^{-1}(b))=b$.) From it we shall define the {\em Voiculescu transform $\varphi_\mu$ of $\mu$} simply as $\varphi_\mu(b)=\mathcal R_\mu(b^{-1})$. This function is easily seen to be defined on an open set in $\mathcal B$ and $F_\mu(\varphi_\mu(b)+b)=b$, so $\Im \varphi_\mu(b)\leq0$ whenever $\Im b>0.$ Obviously, the Voiculescu transform also satisfies
$$
\varphi_\mu(b)+\varphi_\nu(b)=\varphi_{\mu\boxplus\nu}(b).
$$

Finally, denoting $h_\mu(b)=\mathcal{F}_\mu(b)-b,$ $b\in\mathbb H^+(\ncspace{\cB})$, we re-write \eqref{B-transform} using \eqref{F-B}:
\begin{equation}\label{bool}
h_{\mu\uplus\nu}(b)=h_\mu(b)+h_\nu(b)\quad b\in \mathbb H^+(\ncspace{\cB}).
\end{equation}

We shall deal next with linearizing transforms for conditionally free convolutions. Distributions $(\mu,\nu)\in\Sbd\times\Sb$ can be associated a linearizing transform for c-free convolution, the $\CR$-transform. It is defined by the functional equation \begin{equation}\label{c-free}
[M_\mu(b)-\mathds{1}]\cdot M_\nu(b)=M_\mu(b)\cdot\CR_{\mu,\nu}(bM_\nu(b)).
\end{equation}
(We remind the reader that the second coordinate is linearized by the $R$-transform.)

 Note that when $\mathcal B=\mathcal D$ and $E_\mathcal B=\theta$ in Definition \ref{def-cfree} we obtain $\mu=\nu$, so c-free convolution simply coincides with free convolution for both coordinates. In addition, if $\nu$ is the distribution of the zero random variable (corresponding to $M_\nu(b)=\mathds{1}$), then we obtain $M_\mu(b)-\mathds{1}=M_\mu(b)\CR_{\mu,\nu}(b)$, which is equivalent to the  definition of the $B$-transform. Thus, conditionally free convolution can be viewed as interpolating between free and Boolean convolution, as in the scalar case \cite{bsk}.

We will often work in terms of selfadjoint random variables, and not elements in $\Sigma_{\mathcal B:\mathcal D}$ or $\Sigma_{\mathcal B:\mathcal D}^0$; of course, the two approaches are fully equivalent.

\subsection{Voiculescu's subordination result and c-freeness}\label{2.2}
Next, we come to the problem of subordination (the reason why we chose in one of the above definitions to write conditional expectation from $\mathcal A$ to $\mathcal B$ as $E_\mathcal B$ instead of $\varphi$). Generally we denote by $E_\mathcal V$ the conditional expectation from the ``large'' algebra onto the subalgebra $\mathcal V.$ Voiculescu shows in \cite[Theorem 3.8]{V2} that

\begin{thm}\label{subord}
Assume that the selfadjoint operator valued random variables $X$ and
$Y$ are free with amalgamation over $\mathcal B$. Then there exists a unique map $\omega^{(n)}\colon\mathbb H^+(M_n(\mathcal B))\to\mathbb H^+(M_n(\mathcal B))$ so that

\begin{equation}\label{B(X)}
E_{M_n(\mathcal B\langle X\rangle)}[(b-(X+Y)\otimes 1_n)^{-1}]=[\omega^{(n)}(b)-X\otimes1_n]^{-1},\quad b\in\mathbb H^+(M_n(\mathcal B)).
\end{equation}
In particular, $G_{X+Y}^{(n)}(b)=G_X^{(n)}(\omega^{(n)}(b))$ for all $n\in\mathbb N$, $b\in \mathbb H^+(M_n(\mathcal B))$. In addition, $\omega=(\omega^{(n)})_n$ is a non-commutative function and $\Im\omega(b)\ge\Im b$ for all $b\in \mathbb H^+(\ncspace{\cB})$.
\end{thm}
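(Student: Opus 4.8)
\emph{Overall strategy.} I would build the subordination function $\omega$ first near infinity, where every quantity is a convergent power series, then propagate it to all of $\mathbb H^+(\ncspace{\cB})$ by an analytic fixed-point argument, and finally promote the scalar-type subordination of Cauchy transforms to the operator-valued identity \eqref{B(X)} by analytic continuation. Since freeness with amalgamation over $\cB$ passes to freeness of $X\otimes 1_n$ and $Y\otimes 1_n$ over $M_n(\cB)$ (as noted before Remark \ref{about-F}), it is enough to fix a coefficient C$^\ast$-algebra and produce one map $\omega\colon\mathbb H^+(\cB)\to\mathbb H^+(\cB)$ solving \eqref{B(X)}; the same recipe applied to every amplification yields the family $(\omega^{(n)})_n$. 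For $\|b^{-1}\|$ small I would expand $(b-(X+Y))^{-1}=\sum_{k\ge 0}b^{-1}\big((X+Y)b^{-1}\big)^{k}$ and apply the conditional expectation onto the C$^\ast$-completion of $\cB\langle X\rangle$ term by term; freeness of $X$ and $Y$ over $\cB$ rewrites $E_{\cB\langle X\rangle}$ of each word in $X,Y$ as a $\cB$-polynomial in $X$ whose coefficients depend only on the $\cB$-distribution $\nu$ of $Y$, and resumming exactly as in the proof of additivity of the $R$-transform \eqref{R-transform} shows that $E_{\overline{\cB\langle X\rangle}}[(b-(X+Y))^{-1}]$ is invertible with inverse of the form $\omega(b)-X$, where $\omega(b)=b-\mathcal R_\nu(G_{X+Y}(b))\in\cB$. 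Thus $\omega$ is defined and a noncommutative function, and \eqref{B(X)} holds, on a noncommutative ball near infinity.

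\emph{Extension to $\mathbb H^+(\ncspace{\cB})$.} This is the heart of the matter. In the spirit of Belinschi--Bercovici, I would recast the problem as a fixed-point equation: with $\mu,\nu$ the $\cB$-distributions of $X,Y$ and $h_\mu=\mathcal F_\mu-\mathrm{id}$, $h_\nu=\mathcal F_\nu-\mathrm{id}$, consider for fixed $b\in\mathbb H^+(\cB)$ the map
\[
\Phi_b\colon\mathbb H^+(\cB)\to\mathbb H^+(\cB),\qquad \Phi_b(w)=b+h_\nu\big(b+h_\mu(w)\big).
\]
A short computation with $\mathcal F_\mu\circ\omega_1=\mathcal F_\nu\circ\omega_2=\mathcal F_{X+Y}$ and $\omega_1+\omega_2-b=\mathcal F_{X+Y}(b)$, all valid near infinity by $R$-transform additivity, shows that the near-infinity $\omega$ is a fixed point of $\Phi_b$. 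The inequality $\Im\mathcal F_Z(w)\ge\Im w$ of Remark \ref{about-F} (valid because $\cB$ is a C$^\ast$-algebra) gives $\Im(b+h_\mu(w))\ge\Im b>0$ and hence $\Im\Phi_b(w)\ge\Im b$ for all $w\in\mathbb H^+(\cB)$, so $\Phi_b$ maps $\mathbb H^+(\cB)$ into $\{w:\Im w\ge\Im b\}$, which sits at positive distance from $\partial\mathbb H^+(\cB)$. By the Earle--Hamilton fixed point theorem, applied through a biholomorphic bounded realization of $\mathbb H^+(\cB)$, $\Phi_b$ is a strict contraction for an invariant metric, so it has a unique fixed point $\omega(b)$, approached by the iterates $\Phi_b^{\circ k}(w_0)$ from any starting point $w_0$. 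Uniqueness of the fixed point together with analytic dependence of $\Phi_b$ on $b$ (and Theorems \ref{anal-conv}--\ref{indiferent-conv}) makes $b\mapsto\omega(b)$ analytic; amplified over all $M_n(\cB)$ it respects direct sums and similarities because $\Phi_b$ does, so $\omega=(\omega^{(n)})_n$ is a noncommutative self-map of $\mathbb H^+(\ncspace{\cB})$ satisfying $\Im\omega^{(n)}(b)\ge\Im b$, extending the near-infinity construction.

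\emph{Conclusion, and the expected obstacle.} Since $\Im\omega^{(n)}(b)\ge\Im b>0$, \eqref{inverse} makes $(\omega^{(n)}(b)-X\otimes 1_n)^{-1}$ a well-defined noncommutative function on $\mathbb H^+(\ncspace{\cB})$; it agrees with the left-hand side of \eqref{B(X)} on a noncommutative ball near infinity, so by the identity principle for noncommutative functions it agrees everywhere, which is \eqref{B(X)}. Applying $E_{M_n(\cB)}$ to \eqref{B(X)} and using the tower property $E_{M_n(\cB)}\circ E_{M_n(\cB\langle X\rangle)}=E_{M_n(\cB)}$ gives $G_{X+Y}^{(n)}(b)=G_X^{(n)}(\omega^{(n)}(b))$, while uniqueness of $\omega^{(n)}$ is immediate: any two solutions of \eqref{B(X)} have equal values $(\omega(b)-X\otimes 1_n)^{-1}$, hence equal $\omega(b)$. (One can also read $\Im\omega^{(n)}(b)\ge\Im b$ off \eqref{B(X)} directly, since its left-hand side is the operator-valued Cauchy transform of $X+Y$ with respect to the conditional expectation onto $\overline{M_n(\cB\langle X\rangle)}$, so Remark \ref{about-F} applies to its reciprocal $\omega^{(n)}(b)-X\otimes 1_n$.) I expect the main obstacle to be the extension step: setting up $\Phi_b$ carefully on the unbounded domain $\mathbb H^+(\cB)$, checking that it is well-defined and strictly contractive so that the fixed point exists, is unique, and depends analytically on $b$ as a noncommutative function --- exactly the place where Voiculescu's inequality $\Im\mathcal F_Z(w)\ge\Im w$ and the Banach-space holomorphy results are indispensable.
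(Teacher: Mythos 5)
The paper does not actually prove this statement: Theorem \ref{subord} is quoted verbatim from Voiculescu \cite[Theorem 3.8]{V2}, so there is no in-paper proof to match, and your argument is necessarily an independent route. What you propose is the analytic fixed-point approach to subordination (the operator-valued version of the Belinschi--Bercovici argument), whereas Voiculescu's original proof proceeds through the coalgebra of the free difference quotient and an operator-algebraic construction of $\omega$. Your route has the advantage of making the regularity statements ($\omega$ a self-map of the half-plane with $\Im\omega(b)\ge\Im b$, uniqueness) fall out of the fixed-point machinery, at the cost of needing the near-infinity identity as a seed; Voiculescu's route gives \eqref{B(X)} directly on all of $\mathbb H^+(M_n(\cB))$ without a continuation step. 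Two points in your sketch need repair. First, the near-infinity step --- that $E_{\cB\langle X\rangle}[(b-X-Y)^{-1}]$ resums to a resolvent $(\omega(b)-X)^{-1}$ rather than to some more general element of $\cB\langle X\rangle$ --- is precisely the nontrivial content of Voiculescu's theorem; it does not follow by literally the same resummation as the additivity of the $R$-transform (which only controls $E_{\cB}$, not $E_{\cB\langle X\rangle}$), and requires the operator-valued cumulant machinery or Voiculescu's argument itself. Second, your verification of the Earle--Hamilton hypothesis is not correct as written: the image set $\{w:\Im w\ge\Im b\}$ is at positive distance from $\partial\mathbb H^+(\cB)$ but is unbounded, and under a bounded realization of the half-plane (a Cayley-type transform onto the unit ball) its closure touches the boundary at the image of infinity, so ``strictly inside'' fails. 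The fix is that $h_\nu=\mathcal F_\nu-\mathrm{id}$ is \emph{norm-bounded} on $\{w:\Im w\ge\Im b\}$ because $Y$ is a bounded operator, so $\Phi_b$ actually maps $\mathbb H^+(\cB)$ into a set that is both bounded and at positive distance from the boundary; applying Earle--Hamilton to a suitable bounded subdomain containing this image then gives existence, uniqueness and analyticity of the fixed point. With these two repairs your argument is a valid alternative proof.
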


Now let us look at the subordination problem from two different perspectives. First, pick $\mathcal{ A, B, D}$, $\theta,E_\cB$ as in Definition \ref{def-cfree}. Let $X,Y$ be selfadjoint and c-free in $\mathcal A$ over the pair of algebras $\mathcal{B, D}$. We call their (pair) distributions $(\mu_X,\nu_X)$ and $(\mu_Y,\nu_Y)$. We shall re-express the c-free convolution of those two in terms of the subordination functions for the second coordinates. We note first that relation \eqref{c-free} holds for $b$ of small enough norm. Next, recall that if we require in addition that $b$ is also invertible, then $M_\mu(b)=b^{-1}\mathcal G_\mu(b^{-1})$. This holds in general, for distributions $\mu\in\Sigma_{\mathcal{B:D}}$. In such a case, $G^{(n)}_{\mu_X}(b)=(\theta\otimes 1_n)[(b-X)^{-1}]$ for $b$ with positive imaginary part, or with norm of its inverse small enough. 
Now we rewrite \eqref{c-free} in terms of $\mathcal{G}$:
$$
(b^{-1}\mathcal{G}_\mu(b^{-1})-\mathds{1})b^{-1}\mathcal{G}_\nu(b^{-1})=
b^{-1}\mathcal{G}_\mu(b^{-1})\CR_{\mu,\nu}(\mathcal{G}_\nu(b^{-1})).
$$
Replace $b^{-1}$ by $b$:
$$
(b\cdot\mathcal{G}_\mu(b)-1)\cdot b\cdot\mathcal{G}_\nu(b)=
b\cdot\mathcal{G}_\mu(b)\CR_{\mu,\nu}(\mathcal{G}_\nu(b)).
$$
Observe that here we can simplify a $b$ to get
$$
\mathcal{G}_\mu(b)\cdot b\cdot\mathcal{G}_\nu(b)-\mathcal{G}_\nu(b)=
\mathcal{G}_\mu(b)\CR_{\mu,\nu}(\mathcal{G}_\nu(b)).
$$
Now take $(\mu,\nu)=(\mu_X,\nu_X)$, denote by $\omega_1$ the subordination function with respect to $X$ for the second coordinate (so that $E_{\mathcal B\langle X\rangle}[(b-X-Y)^{-1}]=[\omega_1(b)-X]^{-1}$) and replace $b$ by $\omega_1(b)$. Since $\Im\omega_1(b)\ge\Im b$, the equation will hold provided $(\Im b)^{-1}$ is sufficiently small. We get
$$
\mathcal{G}_{\mu_X}(\omega_1(b))\omega_1(b)\mathcal{G}_{\nu_X}(\omega_1(b))-\mathcal{G}_{\nu_X}(\omega_1(b))=\mathcal{G}_{\mu_X}(\omega_1(b))\CR_{\mu_X,\nu_X}(\mathcal{G}_{\nu_X}(\omega_1(b))).
$$
Finally, we multiply to the left by $\mathcal{F}_{\mu_X}(\omega_1(b))$:
$$
\omega_1(b)\mathcal{G}_{\nu_X}(\omega_1(b))-\mathcal{F}_{\mu_X}(\omega_1(b))\mathcal{G}_{\nu_X}(\omega_1(b))
=\CR_{\mu_X,\nu_X}(\mathcal{G}_{\nu_X}(\omega_1(b))).
$$
Repeat the same process with $X$ replaced by $Y$ and $\omega_1$ by $\omega_2$ to get
$$
\omega_2(b)\mathcal{G}_{\nu_Y}(\omega_2(b))-\mathcal{F}_{\mu_Y}(\omega_2(b))\mathcal{G}_{\nu_Y}(\omega_2(b))
=\CR_{\mu_Y,\nu_Y}(\mathcal{G}_{\nu_Y}(\omega_2(b))).
$$
Two remarks which are immediate: first, from Theorem \ref{subord}, we have $\mathcal{G}_{\nu_X}(\omega_1(b))=\mathcal{G}_{\nu_Y}(\omega_2(b))=\mathcal{G}_{\nu_{X+Y}}(b).$ Second, $\CR$ linearizes c-free convolution. Thus, replacing in the above relations and adding them gives us
\begin{eqnarray*}
\lefteqn{\CR_{\mu_{X+Y},\nu_{X+Y}}(\mathcal{G}_{\nu_{Y+X}}(b))=}\\
& & \mbox{}\left[\omega_1(b)-\mathcal{F}_{\mu_X}(\omega_1(b))+\omega_2(b)-\mathcal{F}_{\mu_Y}(\omega_2(b))\right]\mathcal{G}_{\nu_{X+Y}}(b)
\end{eqnarray*}
But this means (if we express $\CR$ properly) that
\begin{eqnarray*}
\lefteqn{(b-\mathcal{F}_{\mu_{X+Y}}(b))\mathcal{G}_{\nu_{X+Y}}(b)=}\\
& & \mbox{}\left[\omega_1(b)-\mathcal{F}_{\mu_X}(\omega_1(b))+\omega_2(b)-\mathcal{F}_{\mu_Y}(\omega_2(b))\right]\mathcal{G}_{\nu_{X+Y}}(b)
\end{eqnarray*}
Denoting $h(b)=\mathcal{F}(b)-b$ and simplifying the invertible
$\mathcal{G}_{\nu_{X+Y}}(b)$ provides us with the operator-valued analogue of \cite[Corollary 4]{B-proc}:
\begin{equation}\label{3}
h_{\mu_{X+Y}}(b)=h_{\mu_{X}}(\omega_1(b))+h_{\mu_{Y}}(\omega_2(b)),
\quad b\in\mathbb H^+(\mathcal B).
\end{equation}
If there is one drawback to this formula it is that we cannot state that $\Im F_\mu(b)\ge\Im b$, as the target algebra for $F$ is $\mathcal D$ (or, to be more precise, its upper half-plane.) Let us also note that $B_{\mu_X}(b^{-1})b=b-F_{\mu_X}(b)=-h_{\mu_X}(b),$ so the above can be written (however in a less pleasant form) in terms of $B$.

For our purposes (related to the infinite divisibility and the triangular arrays of identically distributed rows), we note that the $n$ times c-free convolution is given as
\begin{equation}\label{13}
h_{\mu_{X_1+\cdots+X_n}}(b)=nh_{\mu_{X_1}}(\omega_n(b)),
\end{equation}
where $\omega_n$ is the subordination function with respect to n-fold free additive convolution: $\mathcal{G}_{\nu_{X_1+\cdots+X_n}}(b)=\mathcal{G}_{\nu_{X_1}}(\omega_n(b))$.

In addition to this result, we would like to emphasize the connection of c-free independence with Markovianity, as discussed by Voiculescu. Let us recall that $\mu_X\colon\mathcal B\langle\mathcal X\rangle\to\mathcal D$ is given by $\mu_X(P(\mathcal X))=\theta(P(X))\in\mathcal D$. In particular, we can look at Voiculescu's subordination theorem and apply $\theta$ to its formula:
$$
\theta\left(E_{\mathcal B\langle X\rangle}[(b-(X+Y))^{-1}]\right)=\theta\left([\omega_1(b)-X]^{-1}\right),\quad b\in\mathbb H^+(\cB)
$$
Since $\omega_1(b)\in\mathbb H^+(\mathcal B)$, it is clear from the definition that $\theta\left([\omega_1(b)-X]^{-1}\right)=G_{\mu_X}(\omega_1(b))$. In particular, if $\mathcal D$ happens to be any von Neumann algebra so that $\mathcal B\subset\mathcal D\subset\mathcal B\langle X\rangle$ and $\theta$ itself is a conditional expectation, this simply indicates that c-freeness is in fact a different expression of Markovianity, or, differently said, c-freeness generalizes the Markov property for free algebras.


We conclude this section with a short remark: it follows from \eqref{3} with $\mu_X=\nu_X$, $\mu_Y=\nu_Y$ that $h_{\nu_X}(\omega_1(b))+h_{\nu_Y}(\omega_2(b))=h_{\nu_{X+Y}}(b)$, so
\begin{equation}\label{B-V}
\mathcal{F}_{\nu_{X+Y}}(b)=\omega_1(b)+\omega_2(b)-b\quad b\in\mathbb H^+(\ncspace{B}).
\end{equation}


\section{A Hin\v{c}in type theorem for free additive convolution and the restricted Boolean Bercovici-Pata bijection}
We shall first prove a restricted version of the Boolean Bercovici-Pata bijection, using methods inspired by \cite{BN}. There the bijection was conveniently expressed in terms of the subordination function for the power two free additive convolution. We shall obtain the same result here: it follows from the operator-valued $R$-transform property
\eqref{R-transform}, 
Theorem \ref{subord} and analytic continuation that
\begin{prop}\label{5}
For any $\mathcal B$-valued distribution $\mu$, we
denote $\omega$ the subordination function for
$\mu\boxplus\mu$.
Then the following
functional equations hold {\em:
\begin{equation}\label{o}
\omega(b)=\frac12(b+F_{\mu\boxplus\mu}(b))=\frac12(b+F_\mu
(\omega(b)),
\end{equation}
\begin{equation}\label{F}
F_{\mu\boxplus\mu}(b)=F_\mu\left(\frac12(b+F_{\mu\boxplus\mu}(b))
\right),\quad b\in \mathbb H^+(\mathcal B).
\end{equation}
}
Both these equations hold for the fully matricial extensions of the functions involved.
\end{prop}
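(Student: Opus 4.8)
The plan is to derive both equations from Voiculescu's subordination theorem (Theorem \ref{subord}) applied to the special case $Y$ an independent copy of $X$, together with the symmetry that forces the two subordination functions to coincide. First I would fix a $\mathcal B$-valued selfadjoint random variable $X$ and take $Y$ free from $X$ over $\mathcal B$ with the same distribution $\mu$; then $X+Y$ has distribution $\mu\boxplus\mu$. By Theorem \ref{subord} there are subordination functions $\omega_1,\omega_2\colon\mathbb H^+(M_n(\mathcal B))\to\mathbb H^+(M_n(\mathcal B))$ with $G_{X+Y}(b)=G_X(\omega_1(b))=G_Y(\omega_2(b))$, and hence $\mathcal F_{X+Y}(b)=F_X(\omega_1(b))=F_Y(\omega_2(b))$ wherever these are defined (all statements understood componentwise, i.e. for the fully matricial extensions). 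Since $X$ and $Y$ have the same distribution, $F_X=F_Y$, and uniqueness of the subordination functions then forces $\omega_1=\omega_2=:\omega$.

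Next I would invoke the additivity relation \eqref{B-V}, which in this excerpt was derived precisely from \eqref{3} in the case $\mu_X=\nu_X$, $\mu_Y=\nu_Y$: it states $\mathcal F_{\nu_{X+Y}}(b)=\omega_1(b)+\omega_2(b)-b$. With $\nu_X=\nu_Y=\mu$ (the ordinary free case, where the c-free transform reduces to the $R$-transform) and $\omega_1=\omega_2=\omega$ this reads $F_{\mu\boxplus\mu}(b)=2\omega(b)-b$, i.e. $\omega(b)=\tfrac12(b+F_{\mu\boxplus\mu}(b))$, which is the first equality in \eqref{o}. The second equality in \eqref{o} is then immediate from $F_{\mu\boxplus\mu}(b)=F_\mu(\omega(b))$ (the subordination identity for $\mathcal F$), substituted into the right-hand side. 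Finally, equation \eqref{F} is obtained by substituting the expression $\omega(b)=\tfrac12(b+F_{\mu\boxplus\mu}(b))$ into $F_{\mu\boxplus\mu}(b)=F_\mu(\omega(b))$.

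One point that needs a little care is the domain of validity. Theorem \ref{subord} gives everything on $\mathbb H^+(M_n(\mathcal B))$, but $F_\mu$ a priori takes values in the upper half-plane of the possibly larger algebra $\mathcal D$; when $\mathcal D=\mathcal B$ (which is the relevant case for free convolution, where $\mu\in\Sigma_{\mathcal B}=\Sigma_{\mathcal B:\mathcal B}$) there is no issue and all compositions $F_\mu(\omega(b))$ make sense on all of $\mathbb H^+(M_n(\mathcal B))$. Strictly speaking one can also first establish the identities on a region where norms of inverses are small — where the $R$-transform functional equation \eqref{rtransform} and the series expansions are literally valid — and then extend by the identity theorem for noncommutative (hence, componentwise, Banach-space analytic) functions to all of $\mathbb H^+(M_n(\mathcal B))$; this is the ``analytic continuation'' alluded to in the statement. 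The main (very mild) obstacle is thus simply bookkeeping: making sure the subordination functions for the two identical summands are literally equal, and tracking that each displayed identity holds on a set with nonempty interior so that analytic continuation applies uniformly in $n$.
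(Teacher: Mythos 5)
Your proof is correct and follows essentially the same route the paper intends: the paper's own justification is the one-line remark that the proposition ``follows from the operator-valued $R$-transform property \eqref{R-transform}, Theorem \ref{subord} and analytic continuation,'' and your argument simply fleshes this out, using the symmetry $\omega_1=\omega_2$ for identically distributed free summands together with the already-derived identity \eqref{B-V} (which is exactly the two-summand additivity that $R$-transform linearization produces) and the subordination relation $F_{\mu\boxplus\mu}=F_\mu\circ\omega$. Your remarks on $\mathcal D=\mathcal B$ and on establishing the identities first where the series expansions converge and then extending by analytic continuation correctly address the only points of care.
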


We prove next a Hin\v{c}in type theorem.
\begin{thm}\label{hincin}
Assume that $\{X_{jk}\}_{j\in\mathbb N,1\leq k\leq k_j}$ is a triangular array of random variables in 
$(\mathcal A,E_\mathcal B,\mathcal B)$ of elements free over $\mathcal B$ so that
$\{X_{jk}:1\le k\le k_j\}$ have the same distribution with respect to
$E_\mathcal B$ for each $j\in\mathbb N$ (i.e. rows are identically distributed). Assume in addition that
$$
\limsup_{j\to\infty}
\|X_{j1}+\cdots+X_{jk_j}\|
\leq M
$$
for some $M\ge0$.
If $\lim_{j\to\infty}X_{j1}+X_{j2}+\cdots+X_{jk_j}$ exists
in distribution as norm-limit of moments in $\Sigma_\mathcal B$, then the limit distribution is freely infinitely divisible over 
$\cB$.
\end{thm}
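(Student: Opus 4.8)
The plan is to mimic the classical Hin\v{c}in argument in its free, operator-valued incarnation, exploiting the linearity of the $B$-transform under Boolean convolution together with the subordination description of free convolution. The first step is to pass from the free triangular array to a Boolean one. For each row $j$, let $\mu_j$ be the common $\cB$-distribution of $X_{j1},\dots,X_{jk_j}$, and consider the Boolean convolution power $\nu_j = \mu_j^{\uplus k_j}$, i.e. the distribution with $B_{\nu_j}(b) = k_j B_{\mu_j}(b)$ (using \eqref{B-transform}). I would show that the hypotheses force $\nu_j$ to converge, in the norm-of-moments sense, to the \emph{same} limit $\rho$ as $\boxplus_{k=1}^{k_j}\mu_j$. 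The key to this is that the rows are asymptotically infinitesimal: from $\limsup_j\|X_{j1}+\cdots+X_{jk_j}\|\le M$ and identical distribution within rows, one gets a uniform bound on $\|X_{j1}\|$ that forces $\|X_{j1}\|\to 0$, hence $B_{\mu_j}(b)\to 0$ uniformly on small balls and, more precisely, $k_j B_{\mu_j}(b)$ has the same limit as $k_j (M_{\mu_j}(b)-\mathds{1})$. On the free side, the same smallness of $X_{j1}$ combined with $\Im\omega(b)\ge\Im b$ (Theorem \ref{subord}) and the iterated subordination formula lets one compare the free convolution power with the same linear expression. Concretely, I would use that for $\mu_j$ close to the distribution of $0$, the difference between $h_{\mu_j^{\boxplus k_j}}$ and $k_j h_{\mu_j}$ is controlled by the displacement of the subordination functions, which is itself small, so both converge to $h_\rho$.

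Next I would identify the limit $\rho$ as Boolean infinitely divisible. Since $B_{\nu_j} = k_j B_{\mu_j}$ and $\nu_j\to\rho$, for each fixed $m$ the rescaled transforms $k_j B_{\mu_j}$ converge (via Remark \ref{rmq8}(b), using that $B$ can be substituted for $G$ in Proposition \ref{continuity}) to $B_\rho$, which is therefore a pointwise limit of transforms of the form $t B_{\mu_j}$; dividing by any integer $p$ and using that $(k_j/p)$-type truncations still converge shows $B_\rho/p$ is again a $B$-transform of an element of $\Sb$, so $\rho$ is Boolean infinitely divisible. (Alternatively, and perhaps more cleanly, one invokes the operator-valued Boolean L\'evy--Hin\v{c}in description from \cite{popa-vinnikov}: the limit of $k_j B_{\mu_j}$ must lie in the cone of admissible $B$-transforms because each $k_j B_{\mu_j}$, up to the infinitesimal error, is a conditionally positive object, and that cone is closed under the relevant convergence.) Then I would apply the Boolean-to-free Bercovici-Pata bijection $\mathbb{B}$, which by the earlier development sends a Boolean infinitely divisible $\rho$ to a freely infinitely divisible distribution: the crucial identity is that $\mathbb{B}(\rho)$ has $R$-transform equal to $B_\rho$, and freely infinitely divisible distributions are exactly those whose $R$-transform is ``conditionally positive'' in the appropriate sense, which $B_\rho$ is.

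The final step is to check that the limit distribution of the \emph{free} array is $\mathbb{B}(\rho)$ and not merely $\rho$. Here is where Proposition \ref{5} and the subordination comparison do the real work: using \eqref{13} with $\mu=\mu_j$ and $n=k_j$, $h_{\mu_j^{\boxplus k_j}}(b)=k_j h_{\mu_j}(\omega_{k_j}(b))$, and since the $\mu_j$ are asymptotically trivial one has $\omega_{k_j}(b)\to b$ uniformly on the relevant balls, so $k_j h_{\mu_j}(\omega_{k_j}(b))$ and $k_j h_{\mu_j}(b)=h_{\nu_j}(b)$ have the same limit; this pins down $\lim_j \mu_j^{\boxplus k_j}$ in terms of $\lim_j \nu_j = \rho$ exactly as the bijection $\mathbb{B}$ prescribes, so the limit of the free array is $\mathbb{B}(\rho)$, hence freely infinitely divisible. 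I expect the main obstacle to be the uniformity in the infinitesimality estimates: showing that $\omega_{k_j}(b)\to b$ and that the moment-series errors are uniform over balls and over $m$ (the matricial level), in the general C$^*$-algebraic setting, requires careful use of the uniform boundedness of the array (guaranteed by the $\limsup\|\cdot\|\le M$ hypothesis) together with Theorems \ref{anal-conv} and \ref{indiferent-conv}; this is the step where the noncommutative-function machinery has to be deployed with some care rather than quoted.
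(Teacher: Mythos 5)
There is a genuine error at the heart of your argument. You claim that the infinitesimality of the rows forces $\omega_j(b)\to b$, and hence that $h_{\mu_j^{\boxplus k_j}}(b)=k_jh_{\mu_j}(\omega_j(b))$ and $h_{\mu_j^{\uplus k_j}}(b)=k_jh_{\mu_j}(b)$ have the same limit, i.e.\ that the free and the Boolean convolution powers of a row converge to the same distribution $\rho$. This is false; it would say that the Boolean-to-free Bercovici--Pata bijection is the identity, contradicting the very statement ($\lim_j\mu_j^{\boxplus k_j}=\mathbb{B}(\rho)\neq\rho$) that you invoke two paragraphs later. A scalar counterexample: $\mu_j=\frac12(\delta_{-1/\sqrt{k_j}}+\delta_{1/\sqrt{k_j}})$ satisfies all hypotheses of the theorem (the free sums have norms bounded by $2$), yet $\mu_j^{\boxplus k_j}$ tends to the semicircle law while $\mu_j^{\uplus k_j}$ tends to $\frac12(\delta_{-1}+\delta_1)$. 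The correct asymptotics, read off from the fixed-point equation $\omega_j(b)=\frac1{k_j}b+(1-\frac1{k_j})F_{\mu_j}(\omega_j(b))$, is $\omega_j(b)-F_{\mu_j^{\boxplus k_j}}(b)=\frac1{k_j}\bigl(b-F_{\mu_j^{\boxplus k_j}}(b)\bigr)\to0$: the subordination function converges to the reciprocal Cauchy transform of the \emph{free} limit, not to the identity. (Relatedly, your assertion that $\limsup_j\|X_{j1}+\cdots+X_{jk_j}\|\le M$ forces $\|X_{j1}\|\to0$ is unjustified: for free Poisson-type arrays $\|X_{j1}\|$ stays bounded away from zero; it is only the moments of $\mu_j$, not its norm, that are $O(1/k_j)$.)

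What can be salvaged is your observation that $k_jh_{\mu_j}$ (or rather $(k_j-1)h_{\mu_j}$) is, up to sign, the Voiculescu transform of a distribution that is manifestly freely infinitely divisible by the Popa--Vinnikov criterion. The paper's proof exploits exactly this, but applied at the point $\omega_j(b)$ rather than at $b$: one recognizes $\omega_j$ as the reciprocal Cauchy transform of the explicit distribution $(\mu_j^{\boxplus k_j})^{\uplus 1-k_j^{-1}}$, which is freely infinitely divisible for \emph{every} $j$ (its Voiculescu transform being $(k_j-1)(w-F_{\mu_j}(w))$), and the displayed identity $\omega_j-F_{\mu_j^{\boxplus k_j}}\to0$ shows that this distribution has the same norm-moment limit as $\mu_j^{\boxplus k_j}$; one then concludes by closedness of the set of freely infinitely divisible distributions under such limits, via Proposition \ref{continuity}. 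No comparison with a Boolean array, and no identification of the limit as $\mathbb{B}(\rho)$, is needed for the Hin\v{c}in theorem itself --- that identification is the content of the subsequent theorem.
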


\begin{remark/definition}
We shall call a triangular array of identically distributed rows satisfying $\lim_{j\to\infty}k_j=\infty$ and $\limsup_{j\to\infty}
\|X_{j1}+\cdots+X_{jk_j}\|\leq M$ for some $M\ge0$ {\em infinitesimal}. We shall call a triangular array of distributions infinitesimal if they can be realized operatorially by an infinitesimal triangular array of random variables. It should be noted that in scalar-valued probability, being infinitesimal means that for any $\epsilon>0$, $\lim_{j\to\infty}\int\chi_{[-\epsilon,\epsilon]}(t)\,d\mu_{X_{j1}}(t)=1$; thus, we require a stronger notion of infinitesimality for our theorem.
It should be noted again, however, that when $\mathcal B$ is finitely dimensional, the above theorem remains true even when the stronger requirement of infinitesimality is removed.
\end{remark/definition}

\begin{proof} We first observe that, due to the fact that $\mathcal G_X$ is a noncommutative function and the upper half-plane an admissible noncommutative set, it is enough to consider in our proof the functions $G_X$.
Denote $\mu_j$ the distribution of $X_{1j}$, i.e.
$G_{\mu_j}=E_\mathcal B[(b-X_{1j})^{-1}]$.
We shall use the subordination result:
$$
G_{\mu_j}(\omega_j(b))=G_{\mu_j^{\boxplus k_j}}(b),\quad\omega_j(b)=
\frac1{k_j}b+\left(1-\frac1{k_j}\right)F_{\mu_j}(\omega_j(b)),
\quad b\in\mathbb H^+(\mathcal B).$$
we observe that $\omega_j$ is in fact the
reciprocal of the Cauchy transform of a positive $\mathcal B$-valued
distribution; indeed, it is clear from \eqref{bool} that $\omega_j$ is indeed the
reciprocal of the Cauchy transform of $(\mu_j^{\boxplus k_j})^{\uplus 1-k_j^{-1}}$. Using the characterization in terms of $R$-transform of the free infinite divisibility of Popa and Vinnikov \cite[Theorem 5.9]{popa-vinnikov}, as the
operator-valued Voiculescu transform of this distribution
is simply $\varphi(w)=(k_j-1)(w-F_{\mu_j}(w)),$ $w\in\mathbb H^+(\mathcal B)$, it follows that $(\mu_j^{\boxplus k_j})^{\uplus 1-k_j^{-1}}$ is freely infinitely divisible.

Now our proof is complete: the subordination relation tells us that\\
$\lim_{j\to\infty}\omega_j(b)=\lim_{j\to\infty}F_{\mu^{\boxplus k_j}_j}(b)$ norm-uniformly
on subsets $D\subset\subset\mathbb H^+(\mathcal B)$, so
$\lim_{j\to\infty}(\mu_j^{\boxplus k_j})^{\uplus 1-k_j^{-1}}=
\lim_{j\to\infty}\mu_j^{\boxplus k_j}.$ Since, as noted above,
$(\mu_j^{\boxplus k_j})^{\uplus 1-k_j^{-1}}$ is freely infinitely
divisible and the set of freely infinitely divisible operator-valued
distributions is closed under taking norm-moment limits, by Proposition \ref{continuity} we are done.
\end{proof}

\begin{thm}\label{boolean}
Consider two infinitesimal triangular arrays $\{X_{jk}\}_{j\in\mathbb N
,1\leq k\leq k_j}$ and $\{Y_{jk}\}_{j\in\mathbb N
,1\leq k\leq k_j}$ in $(\mathcal A,E_\mathcal B,\mathcal B)$ so that $X_{jk}$ are all free with amalgamation over $\mathcal B$, $Y_{jk}$ are Boolean independent with amalgamation over $\mathcal B$, and $\{X_{jk}:1\le k\le k_j\}\cup\{Y_{jk}\colon 1\le k\le k_j\}$ have the same distribution with respect to $E_\mathcal B$ for each $j\in\mathbb N$ (i.e. rows are identically distributed).
The following are equivalent:
\begin{enumerate}
\item[(a)] $\lim_{j\to\infty}X_{j1}+X_{j2}+\cdots+X_{jk_j}$ exists
in distribution (as norm-limit convergence of moments); we call the limit distribution $\mu_X$;
\item[(b)] $\lim_{j\to\infty}Y_{j1}+Y_{j2}+\cdots+Y_{jk_j}$ exists
in distribution (as norm-limit convergence of moments); we call the limit distribution $\mu_Y$.
\end{enumerate}
Moreover, the correspondence between the two limiting distributions
is given analytically by the fully matricial extension of the relation
\begin{equation}\label{bbp}
F_{\mu_X}(b)=\frac12\left(b+F_{\mu_Y}(F_{\mu_X}(b))\right)\quad b\in
\mathbb H^+(\mathcal B).
\end{equation}
\end{thm}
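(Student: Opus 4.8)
\emph{Setup and the basic identity.} For each $j$ let $\mu_j\in\Sigma_\mathcal B$ be the common distribution (with respect to $E_\mathcal B$) of the entries $X_{j1},\dots,X_{jk_j},Y_{j1},\dots,Y_{jk_j}$ of the $j$-th row, so that the two row sums have distributions $\mu_j^{\boxplus k_j}$ and $\mu_j^{\uplus k_j}$. Infinitesimality of the two arrays gives $k_j\to\infty$ and a constant $M'$ (any $M'>M$) so that, for $j$ large, \emph{both} $\{\mu_j^{\boxplus k_j}\}$ and $\{\mu_j^{\uplus k_j}\}$ are uniformly bounded by $M'$ in the sense of Definition \ref{marg-unif} — the bound for the free row sums coming from $\limsup_j\|X_{j1}+\cdots+X_{jk_j}\|\le M$, that for the Boolean ones from the analogous estimate for the $Y$'s. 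Replacing $\mathcal B$ by $M_n(\mathcal B)$ and $\mu_j$ by $\mu_j\otimes 1_n$ preserves this bound, so it suffices to argue at the level $n=1$, the matricial statements following coordinatewise. Write $h_\sigma(b)=\mathcal F_\sigma(b)-b$, and let $\omega_j$ be the subordination function of $\mu_j^{\boxplus k_j}=\mu_j\boxplus\cdots\boxplus\mu_j$; by Theorem \ref{subord}, $\mathcal F_{\mu_j^{\boxplus k_j}}=\mathcal F_{\mu_j}\circ\omega_j$, and, exactly as in the proof of Theorem \ref{hincin}, $\omega_j(b)=\frac1{k_j}b+(1-\frac1{k_j})\mathcal F_{\mu_j}(\omega_j(b))$. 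Combining this with \eqref{bool} (which gives $h_{\mu_j^{\uplus k_j}}=k_jh_{\mu_j}$) yields, for $b\in\mathbb H^+(\mathcal B)$,
\begin{equation*}
h_{\mu_j^{\boxplus k_j}}(b)=k_jh_{\mu_j}(\omega_j(b))=h_{\mu_j^{\uplus k_j}}(\omega_j(b)),\qquad \omega_j(b)=b+\Bigl(1-\tfrac1{k_j}\Bigr)h_{\mu_j^{\boxplus k_j}}(b),
\end{equation*}
the operator-valued form of \eqref{13}. In particular $\omega_j(b)$ is, for each fixed $j$, the (unique, see below) fixed point of $w\mapsto b+(1-k_j^{-1})h_{\mu_j^{\uplus k_j}}(w)$. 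This identity carries the whole algebraic content of the theorem.

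\emph{Analytic preparation.} Using the standard bounds for the $\mathcal F$-transform of a distribution of norm $\le M'$ (as in Remark \ref{about-F} and the proof of Proposition \ref{continuity}), I fix $c=c(M')>0$ so large that on the matricial half-plane $\{b:\Im b>c\}$ all the $h_{\mu_j^{\boxplus k_j}}$ and $h_{\mu_j^{\uplus k_j}}$ are defined and bounded in norm by a constant $C$ uniformly in $j$; consequently $\omega_j-\mathrm{id}=(1-k_j^{-1})h_{\mu_j^{\boxplus k_j}}$ is uniformly bounded and, by a Cauchy estimate, has uniformly small derivative, so that each $\omega_j$ is a biholomorphism of $\{\Im b>c\}$ onto an open set containing $\{\Im b>c+2C\}$ with uniformly bounded inverse there (and the same is true for $\mathcal F_{\mu_X}$, $\mathcal F_{\mu_Y}$ once these exist). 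I record two facts: (i) if such maps converge uniformly on $\{\Im b>c\}$ to $\mathcal F$, then their compositional inverses converge, uniformly on a slightly smaller half-plane, to $\mathcal F^{-1}$ — this follows from the holomorphic inverse function theorem on Banach spaces together with Theorems \ref{indiferent-conv}, \ref{anal-conv} and the continuity of composition \cite[Theorem 1.10]{isidro}; (ii) for fixed $b$ the map $w\mapsto b+h(w)$ is a strict contraction on $\{\Im w>c\}$, uniformly for $h$ in our bounded family, so its unique fixed point is holomorphic in $b$ and depends continuously (uniformly on half-planes) on $h$.

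\emph{The two implications.} Assume (a): $\mu_j^{\boxplus k_j}\to\mu_X$ in moments, so $\mu_X\in\Sigma_\mathcal B$ is bounded by $M'$ and, by Proposition \ref{continuity} and Remark \ref{rmq8}(b), $h_{\mu_j^{\boxplus k_j}}\to h_{\mu_X}$ uniformly on $\{\Im b>c\}$; since $k_j^{-1}\to0$, also $\omega_j\to\mathcal F_{\mu_X}$ uniformly there. From the basic identity $h_{\mu_j^{\uplus k_j}}=h_{\mu_j^{\boxplus k_j}}\circ\omega_j^{-1}$ on $\{\Im b>c+2C\}$, and by (i) plus continuity of composition this converges uniformly, on some half-plane, to $h_{\mu_X}\circ\mathcal F_{\mu_X}^{-1}$; as $\{\mu_j^{\uplus k_j}\}$ is uniformly bounded, Proposition \ref{continuity} (via Remark \ref{rmq8}(b)) gives $\mu_j^{\uplus k_j}\to\mu_Y$ in moments for a unique $\mu_Y\in\Sigma_\mathcal B$ with $\mathcal F_{\mu_Y}(b)=b+h_{\mu_X}(\mathcal F_{\mu_X}^{-1}(b))$; evaluating at $b=\mathcal F_{\mu_X}(\beta)$ and using $h_{\mu_X}(\beta)=\mathcal F_{\mu_X}(\beta)-\beta$ gives $\mathcal F_{\mu_Y}(\mathcal F_{\mu_X}(\beta))=2\mathcal F_{\mu_X}(\beta)-\beta$, which is \eqref{bbp}. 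Conversely, assume (b): $\mu_j^{\uplus k_j}\to\mu_Y$ in moments, so $h_{\mu_j^{\uplus k_j}}\to h_{\mu_Y}$ uniformly on $\{\Im b>c\}$. By (ii) the fixed points $\omega_j$ converge, uniformly on half-planes, to the unique fixed point $\omega(b)$ of $w\mapsto b+h_{\mu_Y}(w)$, which satisfies $\omega(b)=\frac12(b+\mathcal F_{\mu_Y}(\omega(b)))$; by Proposition \ref{5}, $\omega$ is the subordination function for $\mu_Y\boxplus\mu_Y$. Since $\mathcal F_{\mu_j^{\boxplus k_j}}=\omega_j+k_j^{-1}h_{\mu_j^{\boxplus k_j}}$ with $h_{\mu_j^{\boxplus k_j}}=h_{\mu_j^{\uplus k_j}}\circ\omega_j$ uniformly bounded, $\mathcal F_{\mu_j^{\boxplus k_j}}\to\omega$ uniformly on half-planes; uniform boundedness of $\{\mu_j^{\boxplus k_j}\}$ and Proposition \ref{continuity} then give $\mu_j^{\boxplus k_j}\to\mu_X$ in moments with $\mathcal F_{\mu_X}=\omega$, and \eqref{bbp} is exactly the fixed-point equation $\mathcal F_{\mu_X}(b)=\frac12(b+\mathcal F_{\mu_Y}(\mathcal F_{\mu_X}(b)))$ of Proposition \ref{5}. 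In both cases the computation is identical at each matricial level, giving the fully matricial version.

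\emph{Main obstacle.} All the real difficulty is in the analytic preparation: getting \emph{uniform} (in $j$) control of $h_{\mu_j^{\boxplus k_j}}$, $h_{\mu_j^{\uplus k_j}}$ and $\omega_j$ on one common half-plane, and — the genuinely delicate point — showing that the compositional inverses $\omega_j^{-1}$ and the fixed points of the contractions $w\mapsto b+h(w)$ converge; since balls in a C${}^*$-algebra are not norm-compact one cannot use normal-family arguments and must lean on Theorems \ref{indiferent-conv}, \ref{anal-conv} and \cite[Theorem 1.10]{isidro}. Once this is set up, the remaining steps are routine transform bookkeeping.
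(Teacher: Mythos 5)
Your proposal is correct and follows essentially the same route as the paper: the subordination function $\omega_j$ of $\mu_j^{\boxplus k_j}$, the identity $h_{\mu_j^{\boxplus k_j}}=k_jh_{\mu_j}\circ\omega_j=h_{\mu_j^{\uplus k_j}}\circ\omega_j$ together with $\omega_j=b+(1-k_j^{-1})h_{\mu_j^{\boxplus k_j}}$ (i.e.\ $\omega_j=\mathcal F_{(\mu_j^{\boxplus k_j})^{\uplus 1-k_j^{-1}}}$), and the transfer between uniform convergence of transforms and norm-convergence of moments via Proposition \ref{continuity} and Remark \ref{rmq8}. The only cosmetic difference is that where the paper invokes the equivalence of convergence of Voiculescu transforms with convergence in distribution, you make the converse direction explicit through a uniform contraction/fixed-point argument for $w\mapsto b+(1-k_j^{-1})h_{\mu_j^{\uplus k_j}}(w)$, which is a legitimate (and somewhat more self-contained) way of carrying out the same step.
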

The correspondence $\mu_X\leftrightarrow\mu_Y$ is the Boolean
Bercovici-Pata bijection. It can be easily seen to be, as in the case
of scalar-valued distributions, a morphism between $\boxplus$ and
$\uplus$.
\begin{proof}
Let us assume that
$\lim_{j\to\infty}X_{j1}+X_{j2}+\cdots+X_{jk_j}$ exists. As seen
in Theorem \ref{hincin}, $\mu_X$ is $\boxplus$-infinitely
divisible.
In terms of the subordination function, this simply corresponds to
the limit described in the proof of the previous theorem.

On the other hand, if existing,
$F_Y(b)-b=\lim_{j\to\infty}k_j(F_{Y_{j1}}(b)-b)$.
Since we know that each of  $k_j(b-F_{Y_{j1}}(b))$ is itself a Voiculescu transform of
a probability measure
(namely of $(\mu_{X_{j1}+\cdots+X_{jk_j}+X_{jk_j+1}})^{\uplus 1-\frac{1}{k_j+1}}=
(\mu_{X_{j1}}^{\boxplus k_j+1})^{\uplus 1-\frac{1}{k_j+1}}$), it is enough to show that the inverse of
$b+k_j(F_{Y_{j1}}(b)-b)$
with respect to composition (which is $\boxplus$-infinitely divisible), converges to
$F_X$ uniformly on compacts.
But this inverse is simply $\omega_{j}$ from the previous theorem's proof.
This completes the proof of one implication.

The converse is simpler. The statement that
$Y_{j1}+\cdots+Y_{jk_j}$ tends to $Y$ in distribution as $j\to\infty$
is equivalent to $\lim_{j\to\infty}k_j(F_{Y_{j1}}(b)-b)=F_Y(b)-b$ uniformly
on compacts of $\mathbb H^+(\mathcal B)$. But then the expression of the Voiculescu transform of the distribution associated with
$\omega_j$ is simply $\varphi_j(w)=(k_j-1)(w-F_{Y_{j1}}(w))$, $w\in\mathbb H^+(\mathcal B)$.
Since convergence of Voiculescu transforms and convergence in
distribution are equivalent, it follows that the distribution associated to
$\omega_j$ converges weakly. Since, as seen in the proof of the previous theorem,
the distribution associated to $\omega_j$ is simply the $(1-k_j^{-1})$th
Boolean power of the distribution of $X_{j1}+\cdots+X_{jk_j}$, it follows that $X_{j1}+\cdots+X_{jk_j}$
also converges in distribution to the same limit. This shows that $\lim_{j\to\infty}
X_{j1}+\cdots+X_{jk_j}$ exists in distribution and its reciprocal Cauchy transform's formula is the one
indicated in \eqref{bbp}.
\end{proof}
As a by-product, we obtain the following corollary describing distributions which are the second power with respect to free additive convolution.
\begin{cor}
An operator-valued distribution $\mu$ is the $n^{\rm th}$ power with respect to free additive convolution of an operator-valued distribution
if and only if $\mu^{\uplus 1-1/n}$ is freely infinitely divisible.
\end{cor}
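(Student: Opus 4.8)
The plan is to reduce the statement to an identity between Voiculescu transforms, in the spirit of Proposition~\ref{5} (which is the case $n=2$) and of the computations in the proofs of Theorems~\ref{hincin} and~\ref{boolean}. Write $s=1-\tfrac1n$ and $\sigma=\mu^{\uplus s}$. By \eqref{bool} one has $h_\sigma=s\,h_\mu$, hence $F_\sigma=\mathrm{id}+s\,h_\mu=(1-s)\,\mathrm{id}+sF_\mu=\tfrac1n\mathrm{id}+\tfrac{n-1}{n}F_\mu$; all such identities are understood for the fully matricial extensions, on a common sub-half-plane or near infinity as appropriate. The key observation that ties the two directions together is that the reciprocal Cauchy transform of the prospective $n$-th root must be $F_\nu=F_\mu\circ F_\sigma^{-1}$, which by the displayed relation equals $\mathrm{id}-\tfrac1{n-1}\varphi_\sigma$.

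For the implication $\mu=\nu^{\boxplus n}\Rightarrow\sigma$ freely infinitely divisible: exactly as in the proof of Theorem~\ref{hincin}, the $n$-fold free subordination function $\omega_n$ of $\nu^{\boxplus n}$ satisfies $\omega_n=F_\sigma$ and $\omega_n(b)=\tfrac1n b+\tfrac{n-1}{n}F_\nu(\omega_n(b))$, so $F_\nu(F_\sigma(b))=F_{\nu^{\boxplus n}}(b)=F_\mu(b)$ and hence $\varphi_\sigma(w)=(n-1)\bigl(w-F_\nu(w)\bigr)=-(n-1)h_\nu(w)$. Since $F_\nu$ is the reciprocal Cauchy transform of a $\cB$-valued selfadjoint distribution, it is defined on all of $\mathbb H^+(\ncspace{\cB})$ with $\Im F_\nu(b)\ge\Im b$ (Remark~\ref{about-F}); hence $\varphi_\sigma$ extends to a noncommutative map on all of $\mathbb H^+(\ncspace{\cB})$ with nonpositive imaginary part there, which is precisely the shape of Voiculescu transform characterized in \cite[Theorem 5.9]{popa-vinnikov} as that of a freely infinitely divisible distribution. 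So $\sigma$ is freely infinitely divisible.

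For the converse, assume $\sigma=\mu^{\uplus 1-1/n}$ is freely infinitely divisible, and let $\varphi_\sigma$ be its Voiculescu transform; by \cite[Theorem 5.9]{popa-vinnikov} it extends to a noncommutative map on all of $\mathbb H^+(\ncspace{\cB})$ with $\Im\varphi_\sigma(b)\le0$ there. Solving $F_\sigma=\tfrac1n\mathrm{id}+\tfrac{n-1}{n}F_\mu$ for the compositional inverse forces the only possible candidate for an $n$-th root to be the noncommutative function $F_\nu:=F_\mu\circ F_\sigma^{-1}=\mathrm{id}-\tfrac1{n-1}\varphi_\sigma$. Because $-\varphi_\sigma$ has nonnegative imaginary part, $\Im F_\nu(b)=\Im b-\tfrac1{n-1}\Im\varphi_\sigma(b)\ge\Im b>0$ on $\mathbb H^+(\ncspace{\cB})$, and $F_\nu(b)$ is asymptotic to $b$ at infinity; by the operator-valued Nevanlinna representation (used already in Remark~\ref{about-F} and in the realization results of \cite{popa-vinnikov}) such a noncommutative self-map of the upper half-plane is the reciprocal Cauchy transform of a genuine distribution $\nu\in\Sigma_\cB$. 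A short computation from $F_\sigma=\tfrac1n\mathrm{id}+\tfrac{n-1}{n}F_\mu$ then yields $\varphi_\nu=F_\nu^{-1}-\mathrm{id}=\tfrac1n\varphi_\mu$, so $\varphi_{\nu^{\boxplus n}}=n\varphi_\nu=\varphi_\mu$ since the Voiculescu transform linearizes $\boxplus$, and therefore $\nu^{\boxplus n}=\mu$ because the Voiculescu transform determines a (bounded) distribution.

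I expect the one genuinely delicate point to be the realization step in the converse: upgrading the abstract noncommutative self-map $F_\nu=\mathrm{id}-\tfrac1{n-1}\varphi_\sigma$ of $\mathbb H^+(\ncspace{\cB})$ to an honest element of $\Sigma_\cB$ (and, if one works within $\Sigma_\cB^0$, controlling the moment growth of $\nu$ so that the ``near-infinity'' functional identities are legitimate and propagate to an equality of moments). This is exactly where the full infinite-divisibility description of \cite{popa-vinnikov} is needed; once it is invoked, what remains is bookkeeping with compositional inverses on nested domains, entirely parallel to Proposition~\ref{5} and the proof of Theorem~\ref{boolean}.
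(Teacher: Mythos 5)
Your proposal is correct and follows essentially the same route as the paper: the forward direction is read off from the proof of Theorem \ref{hincin} via the identification of the $n$-fold subordination function with $F_{\mu^{\uplus 1-1/n}}$ and the formula $\varphi_{\mu^{\uplus 1-1/n}}(w)=(n-1)(w-F_\nu(w))$, and the converse produces the same candidate root, with reciprocal Cauchy transform $b-\tfrac{1}{n-1}\varphi_{\mu^{\uplus 1-1/n}}(b)$, whose global definition on $\mathbb H^+(\ncspace{\cB})$ is guaranteed by the description of freely infinitely divisible distributions in \cite{popa-vinnikov}. Your explicit flagging of the realization step (that this self-map of the upper half-plane is the $\mathcal F$-transform of an honest element of $\Sigma_\cB$) is a fair point, which the paper also resolves only by appeal to the $R$-transform characterization from \cite{popa-vinnikov}.
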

\begin{proof}
First implication has been noted in the proof of Theorem \ref{hincin}, where it is noted that $(\nu^{\boxplus n})^{\uplus 1-1/n}$ must be freely infinitely divisible. The converse follows from \cite{popa-vinnikov}: if $\mu$ is freely infinitely divisible, then as $\mu^{\uplus n/(n-1)}$ satisfies $\mathcal{F}_{\mu^{\uplus n/(n-1)}}(b)=\frac{n}{n-1}\mathcal{F}_\mu(b)-\frac{1}{n-1}b,$ we can use the definition of $\varphi_\mu$ to conclude $\mathcal{F}_{\mu^{\uplus n/(n-1)}}(b+\varphi_\mu(b))=b-\frac{1}{n-1}\varphi_\mu(b)$. Applying $\mathcal{F}_{\mu^{\uplus n/(n-1)}}^{-1}$ and using again the definition of $\varphi$ gives $\varphi_\mu(b)=\varphi_{\mu^{\uplus n/(n-1)}}(b-\frac{1}{n-1}\varphi_\mu(b))-\frac{1}{n-1}\varphi_\mu(b).$ Simple arithmetic gives $\frac1n\varphi_{\mu^{\uplus n/(n-1)}}(b-\frac1{n-1}\varphi_\mu(b))+b-\frac1{n-1}\varphi_\mu(b)=b$. All these relations are valid whenever $\Im b$ has an inverse which is small enough. Analytic continuation gives $\mathcal{F}_{\left(\mu^{\uplus n/(n-1)}\right)^\frac1n}(b)=b-\frac{1}{n-1}\varphi_\mu(b)$. The description from \cite{popa-vinnikov} of freely infinitely divisible distributions in terms of their $R$-transform together with the relation between $R_\mu$ and $\varphi_\mu$ guarantees that $b-\frac{1}{n-1}\varphi_\mu(b)$ is well-defined on all $\mathbb H^+(\ncspace{\cB})$ with imaginary part at least $\Im b$.
\end{proof}

\section{C-free Bercovici-Pata bijection}

In this section we shall connect Boolean, free and conditionally free infinitely divisible distributions both via their Hin\v{c}in-type description as limits of triangular arrays and via explicit formulas linking their transforms. Our results will generalize the results of the previous section, but we will also use those results in our proofs.

In some of the statements and proofs made below, we will use the noncommutative set of nilpotent elements of our C${}^\ast$-algebras and noncommutative functions defined on it; we are aware that there exist many C${}^\ast$-algebras that have no nilpotent elements except zero, however the fully matricial extensions of those elements (as noted in Section 2) are very rich. As we did before, we sometimes write the proofs using the first coordinate of our noncommutative maps and sets, but we do this only for the relative simplicity of notation: the noncommutative structure allows each proof to be re-written for any coordinate.

\begin{lemma}
 Let $\{k_n\}_n$ be an increasing sequence of positive integers and $\{\mu_n\}_n$ be a sequence of elements from $\Sbd$.
 Then $\mu_n^{\uplus k_n}$ norm-converges in moments to $\mu\in\Sbd$  if and only if for all $b\in\Nilp(\cB)$
 \begin{equation}\label{eq71}
  B_\mu(b)=\lim_{n\to\infty} k_n\cdot[ M_{\mu_n}(b)-\mathds{1}].
 \end{equation}
\end{lemma}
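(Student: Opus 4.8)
The plan is to exploit the $B$-transform's additivity under Boolean convolution, equation \eqref{B-transform}, together with the characterization of norm-convergence in moments via uniform convergence of one of the associated transforms on nilpotent elements (or small balls). From \eqref{B-transform} we have $B_{\mu_n^{\uplus k_n}}(b) = k_n B_{\mu_n}(b)$ for all $b \in \Nilp(\cB)$, so the statement is really about translating between the $B$-transform of $\mu_n^{\uplus k_n}$ and the moment series $M_{\mu_n}$ of the single ``small'' distribution $\mu_n$. The defining functional equation \eqref{btransform}, namely $M_\mu(b) - \mathds{1} = B_\mu(b) M_\mu(b)$, is the bridge; note that on $\Nilp(\cB)$ everything is a finite (nilpotent) computation, so convergence issues reduce to convergence of finitely many matrix entries, i.e. of finitely many moments.

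First I would observe that $\mu_n^{\uplus k_n} \to \mu$ norm-in-moments is, by Remark \ref{rmq8}(b) (or directly by condition (b$'$) following the definition of norm-convergence), equivalent to $M_{\mu_n^{\uplus k_n}}(b) \to M_\mu(b)$ for all $b \in \Nilp(\cB)$, and — since $B$ and $M$ determine one another through \eqref{btransform}, and this correspondence is continuous on $\Nilp(\cB)$ — equivalent to $B_{\mu_n^{\uplus k_n}}(b) \to B_\mu(b)$ for all $b \in \Nilp(\cB)$. By \eqref{B-transform}, $B_{\mu_n^{\uplus k_n}}(b) = k_n B_{\mu_n}(b)$, so the condition becomes $\lim_n k_n B_{\mu_n}(b) = B_\mu(b)$. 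It then remains to replace $k_n B_{\mu_n}(b)$ by $k_n(M_{\mu_n}(b) - \mathds{1})$ in this limit; this is where the real work is.

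The key estimate is that $B_{\mu_n}(b) - (M_{\mu_n}(b) - \mathds{1}) = B_{\mu_n}(b)(M_{\mu_n}(b) - \mathds{1})$, which follows by rearranging \eqref{btransform}: indeed $M_{\mu_n}(b) - \mathds{1} = B_{\mu_n}(b)M_{\mu_n}(b) = B_{\mu_n}(b)(\mathds{1} + (M_{\mu_n}(b) - \mathds{1}))$. Hence
\[
k_n\bigl[(M_{\mu_n}(b) - \mathds{1}) - B_{\mu_n}(b)\bigr] = k_n B_{\mu_n}(b)\,(M_{\mu_n}(b) - \mathds{1}).
\]
Now I would argue that if either side of \eqref{eq71} converges, then $k_n B_{\mu_n}(b)$ is bounded (in the relevant nilpotent matrix algebra) while $M_{\mu_n}(b) - \mathds{1} \to 0$: the latter because on a fixed $b \in \Nilp(M_m(\cB))$ with $b^r = 0$, the quantity $M_{\mu_n}(b) - \mathds{1}$ is a fixed polynomial (in $b$ and the first $r-1$ moments of $\mu_n$) with the property that $k_n$ times it converges, forcing $M_{\mu_n}(b) - \mathds{1} = O(1/k_n) \to 0$ since $k_n \to \infty$. (One has to be slightly careful: \eqref{eq71} is stated for all $b \in \Nilp(\cB)$, i.e. all coordinates; convergence of $k_n(M_{\mu_n}(b) - \mathds{1})$ in coordinate $m$ gives decay of the moments $\mu_n(b_1 \X b_2 \cdots)$ of order up to $m-1$, and letting $m$ grow controls all moments, exactly as in \eqref{momente}.) Consequently $k_n B_{\mu_n}(b)(M_{\mu_n}(b) - \mathds{1}) \to 0$, and therefore $\lim_n k_n B_{\mu_n}(b) = \lim_n k_n(M_{\mu_n}(b) - \mathds{1})$ in the sense that one limit exists iff the other does and then they agree. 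Combined with the first paragraph's chain of equivalences, this yields the lemma in both directions.

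The main obstacle I anticipate is the bookkeeping around the equivalence ``norm-convergence in moments of $\mu_n^{\uplus k_n}$'' $\iff$ ``convergence of $B_{\mu_n^{\uplus k_n}}$ on $\Nilp(\cB)$'': one needs the correspondence $\mu \leftrightarrow M_\mu \leftrightarrow B_\mu$ on $\Nilp(\cB)$ to be a homeomorphism in the appropriate (coordinatewise, on each $M_m$) topology, which is essentially the content of Remark \ref{rmq8}(b) but must be invoked carefully since here the $\mu_n$ need not be uniformly bounded — however, on $\Nilp(\cB)$ no uniform bound is needed, as everything reduces to finitely many moments per coordinate, exactly the situation of condition (b$'$). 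Once that is set up cleanly, the rest is the elementary algebraic identity $B_\mu(b) = (M_\mu(b) - \mathds{1})(\mathds{1} + (M_\mu(b)-\mathds{1}))^{-1} = (M_\mu(b) - \mathds{1})M_\mu(b)^{-1}$ together with the observation $M_{\mu_n}(b) \to \mathds{1}$, which is forced by $k_n \to \infty$.
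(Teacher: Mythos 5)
Your proposal is correct and follows essentially the same route as the paper: both reduce norm-convergence in moments to convergence of $M$ (equivalently $B$) coordinatewise on $\Nilp(\cB)$, use the additivity $B_{\mu_n^{\uplus k_n}}=k_nB_{\mu_n}$, and then pass between $k_nB_{\mu_n}(b)$ and $k_n[M_{\mu_n}(b)-\mathds{1}]$ via the identity $B_\nu(b)=[M_\nu(b)-\mathds{1}]M_\nu(b)^{-1}$ together with the observation that $M_{\mu_n}(b)\to\mathds{1}$ because $k_n\to\infty$. The only cosmetic difference is that you write the discrepancy as the product $k_nB_{\mu_n}(b)(M_{\mu_n}(b)-\mathds{1})\to 0$, whereas the paper multiplies by $M_{\mu_n}(b)^{-1}\to\mathds{1}$; these are the same estimate.
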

\begin{proof}
 Since for all $b\in\Nilp(\cB)$ we have that $M_\mu(b)-\mathds{1}\in\Nilp(\cD)$ as noted in Section 2, it follows that $M_\mu(b)$ is invertible, so
 \[
 B_\nu(b)=[M_\nu(b)-\mathds{1}]\cdot M_\nu(b)^{-1}
 \]
 for any $\nu\in\Sbd$, hence the norm-convergence in moments of $\mu_n^{\uplus k_n}$ is equivalent to
  \begin{equation}\label{eq:folos2}
 \lim_{n\to\infty}k_nB_{\mu_n}(b)=B_\mu(b).\ \text{for all}\ b\in\Nilp(\cB)
 \end{equation}

 Suppose first that (\ref{eq:folos2}) holds true. Then $\lim_{n\to\infty}B_{\mu_n}(b)=\frac{1}{k_n}B_\mu(b)=0$, but $[\mathds{1}-B_{\mu_n}(b)]\cdot M_{\mu_n}(b)=\mathds{1}$, therefore $\lim_{n\to\infty}M_{\mu_n}(b)^{-1}=\mathds{1}$, so
 \begin{eqnarray*}
 \lim_{n\to\infty} k_n[M_{\mu_n}(b)-\mathds{1}]
 &=&
 \lim_{n\to\infty} k_n[M_{\mu_n}(b)-\mathds{1}]M_{\mu_n}(b)^{-1}\\
 &=&
 \lim_{n\to\infty} k_n B_{\mu_n}(b)\\
 &=&B_\mu(b).
 \end{eqnarray*}

 For the converse, if (\ref{eq71}) holds true, then we have $\lim_{n\to\infty}(M_{\mu_n}(b)-\mathds{1})=\lim_{n\to\infty}\frac{1}{k_n}B_{\mu}(b)=0$, that is $\lim_{n\to\infty}M_{\mu_n}(b)^{-1}=\mathds{1}$, therefore
 \begin{eqnarray*}
 \lim_{n\to\infty}k_n[M_{\mu_n}(b)-\mathds{1}]
 &=&
 \lim_{n\to\infty}k_n[M_{\mu_n}(b)-\mathds{1}]M_{\mu_n}(b)^{-1}\\
 &=&\lim_{n\to\infty}k_n B_{\mu_n}(b).
 \end{eqnarray*}
\end{proof}

 The next lemma gives a similar characterization for the linearizing transforms of c-free convolution.
\begin{lemma}\label{lema19}
  Let $\{k_n\}_n$ be an increasing sequence of positive integers and $(\mu,\nu)\in\Sbd^0\times\Sb^0$, respectively $\{(\mu_n, \nu_n)\}_n$, be 
an infinitesimal sequence from $\Sbd\times\Sb$. Then the following are equivalent:
 \begin{enumerate}
 \item[a.]
 $(\mu_n, \nu_n)^{\cfree k_n}$ norm-converges in moments to $(\mu,\nu)$
 \item[b.] for all $b\in\Nilp(\cB)$ we have that
 \begin{eqnarray*}
 R_{\nu}(b)&=&\lim_{n\lra\infty} k_n\cdot M_\nu(b)\\
 {}^cR_{\mu,\nu}(b)&=&\lim_{n\lra\infty} k_n\cdot\ M_\mu(b).
 \end{eqnarray*}
 \end{enumerate}
\end{lemma}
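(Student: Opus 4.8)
The plan is to mirror, for c-free convolution, the argument used in the preceding lemma for the Boolean case. The key point is that the defining functional equations \eqref{rtransform} and \eqref{c-free} become, near $b=0$ in $\Nilp(\cB)$, asymptotically linear in the relevant transforms, because the moment generating series $M_{\mu_n}$ and $M_{\nu_n}$ tend to $\mathds{1}$ under the infinitesimality hypothesis. First I would record, exactly as in the previous lemma, that norm-convergence in moments of $\nu_n^{\boxplus k_n}$ to $\nu$ is equivalent (via $R_\nu(b)=[M_\nu(b)-\mathds{1}]M_{\nu'}(b)^{-1}$-type manipulations and the linearizing property \eqref{R-transform}) to $\lim_n k_n R_{\nu_n}(b)=R_\nu(b)$ for all $b\in\Nilp(\cB)$, and hence — since infinitesimality forces $M_{\nu_n}(b)\to\mathds{1}$ and thus $b M_{\nu_n}(b)\to b$ — to $\lim_n k_n[M_{\nu_n}(b)-\mathds{1}]=R_\nu(b)$. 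Feeding $bM_{\nu_n}(b)\to b$ back through the functional equation \eqref{rtransform} identifies this limit with $\lim_n k_n M_{\nu_n}(b)$ up to the correction $M_{\nu_n}(b)-\mathds{1}$ which is $o(1/k_n)$; this is where I would need to be slightly careful, rewriting $k_n M_{\nu_n}(b)$ versus $k_n[M_{\nu_n}(b)-\mathds{1}]$ and arguing they have the same limit. Wait — note that the statement as written says $R_\nu(b)=\lim k_n M_\nu(b)$ with $M_\nu$ not $M_{\nu_n}$; I read this as a typo for $M_{\nu_n}$, and the proof should make that correspondence explicit.

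Second, for the c-free transform, I would start from the defining equation \eqref{c-free} applied to $(\mu_n,\nu_n)$, namely $[M_{\mu_n}(b)-\mathds{1}]\cdot M_{\nu_n}(b)=M_{\mu_n}(b)\cdot\CR_{\mu_n,\nu_n}(bM_{\nu_n}(b))$, and use that $\CR$ linearizes c-free convolution: $\CR_{(\mu_n,\nu_n)^{\cfree k_n}}=k_n\CR_{\mu_n,\nu_n}$ (composed with the appropriate reparametrization $b\mapsto bM_{\nu_n}(b)$). Then the assertion (a) that $(\mu_n,\nu_n)^{\cfree k_n}$ norm-converges in moments to $(\mu,\nu)$ is, by Proposition \ref{continuity} / Remark \ref{rmq8}(b), equivalent to convergence of all the relevant transforms of the components, hence to $\lim_n k_n\CR_{\mu_n,\nu_n}(b)=\CR_{\mu,\nu}(b)$ together with the analogous statement for $R$. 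To translate this into statement (b) I would again invoke infinitesimality: $M_{\mu_n}(b)\to\mathds{1}$ and $M_{\nu_n}(b)\to\mathds{1}$, so $bM_{\nu_n}(b)\to b$, and multiplying \eqref{c-free} by $M_{\mu_n}(b)^{-1}$ gives $\CR_{\mu_n,\nu_n}(bM_{\nu_n}(b))=M_{\mu_n}(b)^{-1}[M_{\mu_n}(b)-\mathds{1}]M_{\nu_n}(b)$, whose $k_n$-multiple has the same limit as $k_n[M_{\mu_n}(b)-\mathds{1}]$ (since the prefactor $M_{\mu_n}(b)^{-1}$ and the postfactor $M_{\nu_n}(b)$ both tend to $\mathds{1}$). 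Composing with the reparametrization, which tends to the identity, yields $\CR_{\mu,\nu}(b)=\lim_n k_n[M_{\mu_n}(b)-\mathds{1}]=\lim_n k_n M_{\mu_n}(b)$ (the two limits agreeing because $M_{\mu_n}(b)-\mathds{1}\to0$). Both directions of the equivalence run through the same chain of identities; the reverse direction uses that if the right-hand limits in (b) exist then $M_{\mu_n}(b)\to\mathds{1}$ and $M_{\nu_n}(b)\to\mathds{1}$ automatically (divide by $k_n$), re-establishing infinitesimality of the transforms and letting us reverse every step.

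The main obstacle I anticipate is bookkeeping around the reparametrization $b\mapsto bM_{\nu_n}(b)$ inside the argument of $\CR$: one must check that on $\Nilp(\cB)$ this map is invertible with inverse converging to the identity noncommutative function, so that pulling limits through the composition is legitimate. On $\Nilp(\cB)$ this is purely formal — $M_{\nu_n}(b)-\mathds{1}$ is nilpotent, the map is a ``unipotent'' perturbation of the identity, hence invertible — and convergence of the inverses follows from $M_{\nu_n}(b)\to\mathds{1}$ coordinatewise together with the fact that on each $\Nilp(\cB;N)$ only finitely many moments are involved; but this should be stated carefully, perhaps by noting (as the paragraph preceding the lemma already hints) that the noncommutative-function formalism lets us work coordinate by coordinate and that on $M_N(\cB)$ nilpotent matrices the series defining all these transforms are in fact finite sums. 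A secondary, minor subtlety is simply making the indexing consistent — writing $M_{\mu_n}, M_{\nu_n}$ rather than $M_\mu, M_\nu$ in the displayed limits — which I would fix in passing.
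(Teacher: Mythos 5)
Your proposal is correct in substance and follows essentially the same route as the paper: both directions are run through the defining functional equations \eqref{rtransform} and \eqref{c-free}, the linearizing properties of $R$ and $\CR$, the preceding Boolean lemma, and the observation that under infinitesimality the reparametrization $b\mapsto bM_{\nu_n}(b)$ (equivalently $b\mapsto F_{\nu_n}(b)$ on the half-plane, which is how the paper phrases the second coordinate, via $\varphi_{\nu_n}$ and Theorem \ref{boolean}) tends to the identity, so that limits pass through the composition by \cite[Theorem 1.10]{isidro} or, on $\Nilp$, by the finiteness of the sums involved. One correction: the typo in statement (b) is not only the missing subscript $n$ but also a missing $-\mathds{1}$; the intended right-hand sides are $\lim_n k_n\cdot[M_{\mu_n}(b)-\mathds{1}]$ and $\lim_n k_n\cdot[M_{\nu_n}(b)-\mathds{1}]$, exactly as in the lemma that precedes this one. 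Your attempted reconciliation --- that $\lim_n k_n M_{\mu_n}(b)$ and $\lim_n k_n[M_{\mu_n}(b)-\mathds{1}]$ agree because $M_{\mu_n}(b)-\mathds{1}\to 0$ --- is false: the two expressions differ by $k_n\mathds{1}$, which diverges, so the former limit never exists. Once the statement is read with the $-\mathds{1}$ restored, what you actually prove is the correct assertion and the rest of your argument stands as the paper's does.
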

\begin{proof}
The implication``(b)$\Rightarrow$(a)'' is a re-phrasing of Theorem \ref{boolean} and the proof is identical. We give it here for the convenience of the reader. Recall the equation $F_{\nu}^{-1}(b)-b=\mathcal \varphi_{\nu}(b)$, for $b\in\mathbb H^+(\mathcal B)$ with $\|b^{-1}\|$ small. The convergence for the second coordinate comes to $\lim_{n\to\infty}k_n\varphi_{\nu_n}(b)=\varphi_{\nu}(b).$ We shall (for now formally) replace $b$ by $F_{\nu_n}(b)$. This gives us
$$
\varphi_{\nu}(F_{\nu_n}(b))=\lim_{n\to\infty}k_n\varphi_{\nu_n}(F_{\nu_n}(b))=\lim_{n\to\infty}k_n(b-F_{\nu_n}(b))=\lim_{n\to\infty}k_n
B_{\nu_n}(b^{-1})b.
$$
We note that indeed we are indeed allowed to make the substitution and take limits by \cite[Theorem 1.10]{isidro}, as $F_{\nu_n}(b)\to b$ in norm, uniformly on closed balls inside any proper region of the upper half-plane, by Remark \ref{rmq8}. same argument works for fully matricial extensions of the above maps. This together with the previous lemma and the equivalence of convergence on $\Nilp$ and $\mathbb H^+$ proves the first statement.

The implication ``(a)$\Rightarrow$(b)'' is now direct, from the definition of $ {}^cR_{\mu,\nu}(b)$ as given in \eqref{c-free}:
$$
(M_\mu(b)-\mathds{1})\cdot M_\nu(b)
=M_\mu(b)
\CR_{\mu,\nu}(bM_\nu(b)
).
$$
We re-write this as $k_n\cdot B_{\mu_n}(b)b^{-1}\mathfrak{H}_{\nu_n}(b)=k_n\cdot \CR_{\mu_n,\nu_n}(\mathfrak H_{\nu_n}(b))$ and note again that $\mathfrak H_{\nu_n}(b)$ converges uniformly to $b$. Using the previous lemma, the linearizing property of $\CR$ and Remark \ref{rmq8}, we conclude.
\end{proof}

 As shown in \cite{popa-vinnikov} for each $\mu\in\Sbd$, there exist some selfadjoint $\gamma\in\cB$ and some linear map $\sigma:\bx\lra\cD$ satisfying \eqref{prop} such that
 \begin{equation}\label{eq72}
 B_\mu(b)=\left[\gamma\cdot\mathds{1} +\widetilde{\sigma}\left(b(\mathds{1}-\X b)^{-1}\right)\right]\cdot b.
 \end{equation}
where $\widetilde{\sigma}$ is the fully matricial extension of $\sigma$ to $\cB\langle\langle\X\rangle\rangle$ - the formal non-commutative power series with coefficients in $\cB$.

 Also, if $(\mu,\nu)\in\Sbd\times\Sb$ is $\cfree$-infinitely divisible, then there exist some selfadjoint $\gamma_0\in\cB$, $\gamma_1\in\cD$ and some linear maps  $\sigma_0:\bx\lra\cB$, $\sigma_1:\bx\lra\Sbd$ satisfying \ref{prop} such that
 \begin{eqnarray}
 R_\nu(b)=\left[\gamma_0\cdot\mathds{1} +\widetilde{\sigma_0}\left(b(\mathds{1}-\X b)^{-1}\right)\right]\cdot b\label{eq73}\\
 {}^cR_{\mu, \nu}(b)=\left[\gamma_1\cdot\mathds{1} +\widetilde{\sigma_1}\left(b(\mathds{1}-\X b)^{-1}\right)\right]\cdot b\label{eq74}
 \end{eqnarray}
 Moreover, if the moments of $\mu,\nu$ do not grow faster than exponentially (that is $(\mu,\nu)\in\Sbd^0\times\Sb^0$) then so do the moments of $\sigma, \sigma_0$ and $\sigma_1$ from above.

 \begin{defn}\label{def43}

 We define the bijection $\mathcal{BP}$ from $\Sbd\times\Sb$ to the set of all $\cfree$-infinitely divisible elements of $\Sbd\times\Sb$ as follows: if $B_\mu$, respectively $B_\nu$ are determined as above by the pairs $(\gamma, \sigma)$ and $(\gamma_0, \sigma_0)$, then $\mathcal{BP}(\mu,\nu)$ is the $\cfree$-infinitely divisible pair $(\mu^\prime, \nu^\prime)$ such that ${}^cR_{\mu^\prime, \nu^\prime}$ and $R_{\nu^\prime}$ are determined, as above, by $(\gamma, \sigma)$, respectively $(\gamma_0, \sigma_0)$.

 \end{defn}

\begin{remark}\label{remark44}
$\mathcal{BP}$ maps $\Sbd^0\times\Sb^0$ onto the $\cfree$-infinitely divisible elements of $\Sbd^0\times\Sb^0$. Moreover, if $(\mu,\nu)\in\Sbd^0\times\Sb^0$ and $\mu^\prime$ is the first coordinate of $\mathcal{BP}(\mu,\nu)$, then
\begin{eqnarray}\label{c-free-bbp}
h_{\mu^\prime}(b) & = & h_\mu(F_{\mathcal{BP}(\nu)}(b))\nonumber\\
\mathcal{F}_{\mathcal{BP}(\nu)}(b) & = & \frac12(b+\mathcal{F}_\nu(\mathcal{F}_{\mathcal{BP}(\nu)}(b))),\quad b\in\mathbb H^+(\ncspace{\cB}).
\end{eqnarray}
\end{remark}
\begin{proof}

 The first assertion is an immediate consequence of Remark 5.5 from \cite{popa-vinnikov}; more precisely if the components of the $R$- and ${}^cR$-transforms of $\nu$ and $(\mu,\nu)$ do not grow faster than exponentially, then so do the moments of $\nu$ and $\mu$.

 The second equation of the second assertion is simply equation \eqref{bbp}; looking at $\mathcal R_{\mathcal{BP}(\nu)}(b)=\gamma_0\cdot\mathds{1}
+\widetilde{\sigma_0}\left((b^{-1}-\X)^{-1}\right)$, $\Im b<0$, we observe that $\varphi_{\mathcal{BP}(\nu)}(b)=\mathcal R_{\mathcal{BP}(\nu)}(b^{-1})=\gamma_0\cdot\mathds{1}
+\widetilde{\sigma_0}\left((b-\X)^{-1}\right)$, $\Im b>0$. We note that if $\Im b^{-1}$ is small in norm, then $\Im\widetilde{\sigma_0}\left((b-\X)^{-1}\right)$ is also small in norm; indeed, this has been noted in the proof of Proposition \ref{continuity}. Thus, when $\Im b^{-1}$ is small enough, $b+\varphi_{\mathcal{BP}(\nu)}(b)\in\mathbb H^+(\ncspace{\cB})$.  Using the expression of $\varphi$ in terms of $\mathcal{F}$, by replacing $b$ with $b+\varphi_{\mathcal{BP}(\nu)}(b)$ our equation is equivalent to
\begin{eqnarray*}
b & = & \frac{1}{2}\left(b+\varphi_{\mathcal{BP}(\nu)}(b)+\mathcal{F}_\nu(b)\right),
\end{eqnarray*}
which is trivially equivalent to
\begin{eqnarray*}
\gamma_0\cdot\mathds{1}
+\widetilde{\sigma_0}\left((b-\X)^{-1}\right) & = & -\varphi_{\mathcal{BP}(\nu)}(b)\\
& = & \mathcal{F}_\nu(b)-b\\
& = & -B_\nu(b^{-1})b.
\end{eqnarray*}
Analytic continuation concludes the proof of the second equation.

We note once again that this second equation together with Proposition \ref{5} indicates that $\mathcal{F}_{\mathcal{BP}(\nu)}$ is in fact simply the {\em subordination function} corresponding to the free additive convolution of $\nu$ with itself: $\mathcal{F}_\nu(\mathcal{F}_{\mathcal{BP}(\nu)}(b))=\mathcal{F}_{\nu\boxplus\nu}(b)$. For the first equation let us simply rewrite the defining relation \eqref{c-free} for $\CR$ expressed in terms of $\gamma_1$ and $\widetilde{\sigma_1}$ as
$$
-h_{\mu'}(b)=b-\mathcal{F}_{\mu'}(b)=\gamma_1\cdot\mathds{1}+\widetilde{\sigma_1}\left((\mathcal{F}_{\mathcal{BP}(\nu)}(b)- \X)^{-1}\right),\quad b\in\mathbb H^+(\ncspace{\cB}).
$$
As in the definition of the Bercovici-Pata bijection $B_\mu$ is given by $\gamma_1$ and $\widetilde{\sigma_1}$, this is the claimed relation.
\end{proof}

We go now to the main result of this section.
\begin{thm}

Let $(\mu_n,\nu_n)_n$ be an infinitesimal sequence from $\Sbd\times\Sb$ and $\{k_n\}_n$ be an increasing sequence of positive integers. The following properties are equivalent:
\begin{enumerate}
\item[(1)] $(\mu_n, \nu_n)^{\uplus k_n}$ norm-converges in moments to some $(\mu, \nu)\in\Sbd^0\times\Sb^0$
\item[(2)]$(\mu_n, \nu_n)^{\cfree\, k_n}$ norm-converges in moments in $\Sbd^0\times\Sb^0$
\item[(3)]$(\mu_n, \nu_n)^{\cfree\, k_n}$ norm-converges in moments to $\mathcal{BP}(\mu, \nu)$ for some $(\mu, \nu)\in\Sbd^0\times\Sb^0$
\item[(4)] There exist the pairs $(\gamma, \sigma)$ and $(\gamma_0, \sigma_0)$ determining $B_\mu$ and $B_\nu$ as above such that for all $m$ amd all $b_1,\dots,b_m\in\cB$:
\begin{align*}
\lim_{n\lra\infty}&(\mu_n)(\X)=\gamma\\
\lim_{n\lra\infty}&(\nu_n)(\X)=\gamma_0\\
\lim_{n\lra\infty}&k_n\cdot \mu_n(\X b_1\X b_2\cdots b_m\X)=\sigma(b_1\X b_2\cdots\X b_m)\\
\lim_{n\lra\infty}&k_n\cdot\nu_n(\X b_1\X b_2\cdots b_m\X)=\sigma_0(b_1\X b_2\cdots\X b_m)
\end{align*}

\end{enumerate}
\end{thm}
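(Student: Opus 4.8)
The plan is to funnel all four conditions through a single auxiliary object --- the sequence of noncommutative functions $\Phi_n(b)=k_n\bigl(M_{\mu_n}(b)-\mathds 1\bigr)$ and $\Psi_n(b)=k_n\bigl(M_{\nu_n}(b)-\mathds 1\bigr)$ on $\Nilp(\cB)$ --- and to show that each of (1)--(4) is equivalent to the single assertion that $\lim_n\Phi_n$ and $\lim_n\Psi_n$ exist on $\Nilp(\cB)$ and are, respectively, the $B$-transform of an element of $\Sbd^0$ and of $\Sb^0$. The first step is bookkeeping with the linearization identities: Boolean convolution of a pair acts coordinatewise, so by \eqref{B-transform} one has $B_{(\mu_n,\nu_n)^{\uplus k_n}}=(k_nB_{\mu_n},k_nB_{\nu_n})$; and $\cfree$-convolution is linearized on the second coordinate by $k_nR_{\nu_n}$ and on the first by $k_n{}^cR_{\mu_n,\nu_n}$. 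Since the arrays are infinitesimal, $M_{\mu_n}(b)\to\mathds 1$ on $\Nilp(\cB)$, so (as in the proof of the first lemma of this section) $k_nB_{\mu_n}$ has the same limit as $\Phi_n$, and likewise $k_nR_{\nu_n}$, $k_n{}^cR_{\mu_n,\nu_n}$ have the same limits as $\Psi_n,\Phi_n$. Feeding this into that first lemma gives $(1)\Leftrightarrow[\Phi_n\to B_\mu,\ \Psi_n\to B_\nu]$ for the $\uplus$-limit $(\mu,\nu)$, and feeding it into Lemma~\ref{lema19} gives $(2)\Leftrightarrow[\Phi_n\to{}^cR_{\mu',\nu'},\ \Psi_n\to R_{\nu'}]$ for the $\cfree$-limit $(\mu',\nu')$.

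The second step identifies these limiting function classes. The representations \eqref{eq72}, \eqref{eq73}, \eqref{eq74} of the $B$-, $R$- and ${}^cR$-transforms all have the identical structural shape $[\delta\cdot\mathds 1+\widetilde\tau(b(\mathds 1-\X b)^{-1})]\cdot b$ with representing datum a pair $(\delta,\tau)$, $\tau$ subject to \eqref{prop} (and, in the $\Sigma^0$ case, to exponential growth \eqref{pr2}). Hence a function on $\Nilp(\cB)$ is the $B$-transform of an element of $\Sbd^0$ precisely when it is the ${}^cR$-transform of some $\cfree$-infinitely divisible element of $\Sbd^0$, the matching being exactly $\mathcal{BP}$ of Definition~\ref{def43}; the analogous statement holds for $B$-transforms of $\Sb^0$ versus $R$-transforms of freely infinitely divisible elements of $\Sb^0$. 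Therefore the two bracketed conditions describe the same thing, which yields $(1)\Leftrightarrow(2)$; and when they hold, uniqueness of the representing datum forces ${}^cR_{\mu',\nu'}$ to be represented by the same $(\gamma,\sigma)$ as $B_\mu$ and $R_{\nu'}$ by the same $(\gamma_0,\sigma_0)$ as $B_\nu$, i.e.\ $(\mu',\nu')=\mathcal{BP}(\mu,\nu)$. This gives $(1)\Rightarrow(3)$, while $(3)\Rightarrow(2)$ is immediate, closing the loop. The explicit analytic form of the correspondence is already recorded in \eqref{c-free-bbp}, and can be quoted as a consistency check or to transport the identification to $\mathbb H^+(\ncspace\cB)$.

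It remains to match (4). Expanding $M_{\mu_n}(b)-\mathds 1=\sum_{k\ge1}(\mu_n\otimes\id)\bigl([\X b]^k\bigr)$ and reading off matrix entries via \eqref{momente}, convergence of $\Phi_n$ on $\Nilp(\cB)$ is equivalent to the convergence of $k_n\mu_n(\X)$ together with all $k_n\mu_n(\X b_1\X b_2\cdots b_m\X)$; and by the shape \eqref{eq72} of the limiting $B$-transform these limits are forced to be the coefficients of $B_\mu$, namely $\gamma$ and $\sigma(b_1\X b_2\cdots\X b_m)$. Carrying out the same expansion on the second coordinate produces the conditions involving $k_n\nu_n(\,\cdot\,)$, $\gamma_0$ and $\sigma_0$. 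Hence $(4)\Leftrightarrow[\Phi_n\to B_\mu,\ \Psi_n\to B_\nu]\Leftrightarrow(1)$, and the equivalence is complete.

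The routine part is this orchestration; the genuine obstacle is the structural input of the second step, namely that when the coefficientwise limits $\lim_n\Phi_n$, $\lim_n\Psi_n$ exist they really are transforms of honest distributions in $\Sbd^0$ and $\Sb^0$ --- equivalently, that the positivity condition \eqref{prop} and the exponential growth \eqref{pr2} pass to the limiting representing data $(\gamma,\sigma)$, $(\gamma_0,\sigma_0)$. This rests on the structural analysis of the $B$-, $R$- and ${}^cR$-transforms in \cite{popa-vinnikov} (the same input behind Remark~\ref{remark44}), together with the uniform boundedness of the rows of an infinitesimal array, which guarantees the norm-limits stay in $\Sigma^0$. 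A secondary technical point is the shuttling between the three notions of convergence in play --- convergence of matrix coefficients, norm-convergence of moments, and locally uniform convergence of the analytic transforms on balls inside $\mathbb H^+(\ncspace\cB)$ --- which is what makes Lemma~\ref{lema19} (and the substitutions $b\mapsto F_{\nu_n}(b)$, $b\mapsto b^{-1}$ it relies on) applicable; this is handled by Theorems~\ref{anal-conv}, \ref{indiferent-conv}, \cite[Theorem 1.10]{isidro} and Remark~\ref{rmq8}. With these secured, the theorem follows.
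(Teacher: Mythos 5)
Your proposal is correct in substance but takes a genuinely different route to the core equivalence $(1)\Leftrightarrow(2)\Leftrightarrow(3)$. The paper proves $(1)\Leftrightarrow(2)$ analytically, via Voiculescu's subordination: it writes the $k_n$-fold c-free convolution as $h_{(\mu_n,\nu_n)^{\cfree k_n}}(b)=k_n h_{\mu_n}(\omega_n(b))$ with $\omega_n\to\mathcal{F}_{\nu_\boxplus}$, and passes limits through compositions on $\mathbb H^+(\ncspace{\cB})$ using Theorem \ref{boolean}, Proposition \ref{continuity} and \cite[Theorem 1.10]{isidro}, identifying the limit with $\mathcal{BP}(\mu,\nu)$ by Remark \ref{remark44}. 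You instead stay on $\Nilp(\cB)$, reduce all four conditions to convergence of $\Phi_n=k_n(M_{\mu_n}-\mathds{1})$ and $\Psi_n=k_n(M_{\nu_n}-\mathds{1})$ via the two lemmas of the section, and then exploit the fact that the representations \eqref{eq72}, \eqref{eq73}, \eqref{eq74} describe the same class of functions, so that ``being a $B$-transform of an element of $\Sigma^0$'' and ``being an $R$- or ${}^c\!R$-transform of an infinitely divisible element of $\Sigma^0$'' coincide, matched exactly by $\mathcal{BP}$. Your route is more uniform --- condition (4) falls out immediately by reading off coefficients (and you correctly restore the factors $k_n$ that the paper's statement of (4) omits in its first two limits) --- but it concentrates all the analytic content in the structural claim that the coefficientwise limits of $\Phi_n,\Psi_n$ are honest transforms, i.e.\ that positivity \eqref{prop} and the growth bound \eqref{pr2} pass to the limiting data $(\gamma,\sigma),(\gamma_0,\sigma_0)$; this is in effect a re-derivation of the Hin\v{c}in-type fact that the limit in (2) is automatically $\cfree$-infinitely divisible. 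You flag this as the key point and it is indeed covered by the L\'evy--Hin\v{c}in theorems of \cite{popa-vinnikov} together with infinitesimality --- the same inputs behind Theorem \ref{hincin} and Definition \ref{def43} --- so the argument closes. What the paper's subordination route buys in exchange is the explicit formula \eqref{c-free-bbp} as a by-product and the ability to construct the c-free limit directly as $\mathcal{BP}(\mu,\nu)$ rather than proving its infinite divisibility after the fact.
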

\begin{proof} Note that, by Theorem \ref{boolean} $\nu_n^{\uplus k_n}$ converges if and only if $\nu_n^{\boxplus k_n}$ converges. The two limits will be called in this proof $\nu$ and $\nu_\boxplus$ respectively. We observe that by the previous remark and Theorem \ref{boolean}, $\nu_\boxplus=\mathcal{BP}(\nu).$ As shown in subsection \ref{2.2}, $(\mu_{X_{1,n}+\cdots+X_{k_n,n}},\nu_{X_{1,n}+\cdots+X_{k_n,n}})=(\mu_n, \nu_n)^{\cfree k_n}$ can be expressed coordinatewise in terms of transforms:
$$
\begin{array}{cc}
h_{\mu_{X_{1,n}+\cdots+X_{k_n,n}}}(b)=k_nh_{\mu_{X_{1,n}}}(\omega_n(b)) & (\textrm{c-free})\\
 & \\
\omega_n(b)=\frac1{k_n}b+\left(1-\frac1{k_n}\right)F_{\nu_{X_{1,n}+\cdots+X_{k_n,n}}}(b)=\frac1{k_n}b+\left(1-\frac1{k_n}\right)\mathcal{F}_{\nu_{X_{1,n}}}(\omega_n(b)) & (\textrm{free})
\end{array}
$$
Recall from the proof of Theorem \ref{boolean} that $\omega_n=\mathcal{F}_{\nu_{X_{1,n}+\cdots+X_{k_n,n}}^{\uplus1-k_n^{-1}}}$ and $\nu_{X_{1,n}+\cdots+X_{k_n,n}}^{\uplus1-k_n^{-1}}$ converges to $\nu_\boxplus$.

$(1)\implies(2)$: Assume first that (1) holds. Then, as seen above, $\nu_n^{\boxplus k_n}$ converges to $\nu_\boxplus$. Since $\mu_n^{\uplus k_n}$ converges to $\mu$, it was shown in Proposition \ref{continuity} that $\lim_{n\to\infty}k_nh_{\mu_{X_{1,n}}}(b)=h_{\mu}(b)$ uniformly on closed bounded subsets of the upper half-plane which are at positive distance from $\partial\mathbb H^+(\mathcal B)$. Thus,
$$
\lim_{n\to\infty}k_n\cdot h_{\mu_{X_{1,n}}}(\omega_n(b))=h_\mu(F_{\nu_\boxplus}(b)),\quad b\in\mathbb H^+(\mathcal B),
$$
and the limit is uniform on closed bounded subsets of the upper half-plane, as shown in \cite[Theorem 1.10]{isidro}. Same argument works for the fully matricial extensions, and we conclude that (2) holds.

$(2)\implies(1)$: Next, assume that (2) holds. This clearly means that in relation (free) above we can take limits when $n$ tends to infinity to obtain that both $\nu_{X_{1,n}+\cdots+X_{k_n,n}}^{\uplus1-k_n^{-1}}$ and $\nu_{X_{1,n}+\cdots+X_{k_n,n}}$ converge to $\nu_\boxplus.$ So $\omega_n$ norm-converges to $\mathcal{F}{\nu_\boxplus}$. We recall that $\omega_n$ has as right inverse with respect to composition the function $w\mapsto k_nw+(1-k_n)\mathcal{F}_{\nu_{X_{1,n}}}(w)$,
and by Theorem \ref{boolean} and Proposition \ref{continuity} this function converges uniformly on closed bounded sets of the upper half-plane $\mathbb H^+(\ncspace{\cB})$ to $w-h_\nu(w)$. So for $\Im b$ sufficiently large in order for $b-h_\nu(b)$ to belong to $\mathbb H^+(\ncspace{\cB})+i\mathds{1}$,
\begin{eqnarray*}
\lim_{n\to\infty}k_n\cdot h_{\mu_{X_{1,n}}}(b)&=&\lim_{n\to\infty}k_n\cdot h_{\mu_{X_{1,n}}}(\omega_n(k_nb+(1-k_n)\mathcal{F}_{\nu_{X_{1,n}}}(b)))\\
&=&\lim_{n\to\infty}h_{\mu}(b-h_\nu(b))
\end{eqnarray*}
In the last equality we have used the assumption that $k_n\cdot h_{\mu_{X_{1,n}}}\circ\omega_n$ converges to $h_\mu$ uniformly on bounded closed sets of the upper half-plane and that the function $w\mapsto k_nw+(1-k_n)\mathcal{F}_{\nu_{X_{1,n}}}(w)$ converges also uniformly to $w\mapsto w-h_\nu(w)$. Since we assumed $b-h_\nu(b)\in\mathbb H^+(\ncspace{\cB})+i\mathds{1}$, the equality follows. So we have proved (1) must hold. This shows the equivalence between (1) and (2).

The previous remark indicates that indeed the limit of $(\mu_n, \nu_n)^{\cfree\, k_n}$ as $n$ tends to infinity, if existing, must be $\mathcal{BP}(\mu, \nu)$, thus establishing $(2)\Leftrightarrow(3).$

Finally the equivalence of (4) to (1), (2) and (3) is a direct consequence of Lemma \ref{lema19}, the equivalence between norm-convergence on $\Nilp$ and $\mathbb H^+$ for the corresponding transforms, and the definition of the Bercovici-Pata bijection.
\end{proof}

\begin{remark}
We would like to come back once again to the issue of distributions in
$\Sigma_\mathcal{B:D}$, or even in a larger set. As shown by Bercovici
and Voiculescu \cite{BVIUMJ}, one can define free convolutions of probability measures with unbounded support, even when they have no moments whatsoever. In the same paper, they provide an operatorial representation, by showing that if $\mathcal A_1,\mathcal A_2$ are free in $(\mathcal A,\varphi)$, and $X_j=X_j^*$ are {\em unbounded} operators affiliated with $\mathcal A_j$ and having distributions $\mu_{X_j}$, $j=1,2$, then $X_1+X_2$ is a selfadjoint unbounded operator affiliated to $\mathcal A$ and $\mu_{X_1+X_2}=\mu_{X_1}\boxplus\mu_{X_2}.$ In the case of {\em operator-valued} distributions, to our best knowledge such a result does not exist as of now. However, it is very easy to observe that if $(\mathcal A, E_\mathcal B,\mathcal B)$ is an op-valued noncommutative probability space and $X=X^*$ is affiliated to $\mathcal A$, then $(b-X)^{-1}\in\mathcal A$ for any $b\in\mathbb H^+(\mathcal B)$ (one only needs to use analytic functional calculus), so $E_\mathcal B[(b-X)^{-1}]$ is well-defined. However, except when $\mathcal B$ is finite dimensional and $\mathcal A$ is a finite von Neumann algebra, we do not know whether any of the results of Voiculescu and Speicher (existence and good behavior of $R$-transform, subordination etc) remains valid. When $\mathcal B$ is finite dimensional, one can make certain generalizations (for example one can talk of convergence of distributions in $\Sigma_\mathcal{B}^0$ to a distribution in $\Sigma_\mathcal{B}$, or even one without moments - provided we define it appropriately - and even obtain Theorem \ref{hincin} for such a limit when the infinitesimality of an array is defined as convergence of $G_{X_{ij}}$ to $b\mapsto b^{-1}$). However, we repeat that our purpose in this paper was to provide results in maximum generality in terms of $\mathcal B$. We shall postpone a detailed discussion of finite dimensional scalar algebras to future papers.
\end{remark}

\section{The homomorphism property of the Bercovici-Pata bijection}

In this section, we restrict ourselves to the  scalar case, i.e. we consider $\mathcal B=\mathcal D=\mathbb C$ and in Definition \ref{def-cfree} $E_\mathcal B,\theta$ to be a pair of states $\phi,\psi$ . Also, we will use the complex analytic $R$-, $\CR$- and $B$-transforms\footnote{The reader is warned that in scalar probability, the transform $B$ is denoted by $\eta$, unlike in operator-valued probability; we have chosen not to change the notation because we use the operator-valued definitions and results already introduced.}, not their non-commutative versions.

For two pairs of distributions $(\phi_X,\psi_X)$ and $(\phi_Y,\psi_Y)$ given as in Definition \ref{defn1} by the c-free random variables $X$ and $Y$, we shall denote $\psi_X\boxtimes\psi_Y$ for $\phi_{XY}$ and  $(\phi_X,\psi_X)\boxtimes_c(\phi_Y,\psi_Y)$ for $(\phi_{XY},\psi_{XY})$.

In terms of transforms, if
\[
T_\nu(z)=\frac{z}{R^{-1}(z)}, \ \ {}^cT_{\mu,\nu}(z)=\frac{1}{R_{\nu}^{-1}(z)}\CR_{\mu,\nu}(R_{\nu}^{-1}(z))
\]
it has been shown (see \cite{VM}, respectively \cite{cT-transf}) that
\begin{eqnarray*}
&&\hspace{-.5cm}T_{\mu\boxtimes\nu}(z)=T_\mu(z)T_\nu(z) \\
&&\hspace{-.5cm}{}^cT_{(\mu_1,\nu_1)\boxtimes_c(\mu_2,\nu_2)}(z)={}^cT_{(\mu_1,\nu_1)}(z)\cdot {}^cT_{(\mu_2,\nu_2)}(z).
\end{eqnarray*}
(Recall the convention that $f^{-1}(z)$ denotes the compositional inverse of $f$ evaluated in $z$, while $f(z)^{-1}$ means $\frac{1}{f(z)}$.) All these relations hold on a neighborhood of zero.

It has been shown by Belinschi and Nica \cite{BN} that the Boolean Bercovici-Pata bijection is a homomorphism with respect to free multiplicative convolution, meaning that
$$
\mathcal{BP}(\mu\boxtimes\nu)= \mathcal{BP}(\mu)\boxtimes\mathcal{BP}(\nu).
$$
The purpose of this section is to generalize this result to multiplicative c-free convolution. The main tool in \cite{BN} was the observation that $S_{\mathcal{BP}(\mu)}(z)=S_\mu\left(\frac{z}{1-z}\right).$ This observation turns out to be true also for the $T$-transform:
\begin{lemma}
For any compactly supported pair of distributions $(\mu,\nu)$ so that $\int t\,d\nu(t)\neq0$, we have
$$
{}^cT_{\mathcal{BP}(\mu,\nu)}(z)={}^cT_{(\mu,\nu)}\left(\frac{z}{1-z}\right),
$$
for $z$ in a neighborhood of zero.
\end{lemma}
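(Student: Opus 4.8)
The plan is to reduce the identity to two elementary functional equations among the scalar transforms $B_\mu$, $B_\nu$, $R_\nu$ and $\CR_{\mu,\nu}$, and then to feed in the definition of $\mathcal{BP}$. Since $\mathcal B=\mathcal D=\mathbb C$, all these objects are ordinary analytic functions near $0$ with vanishing constant term, and the hypothesis $\int t\,d\nu(t)\neq0$ says exactly that the linear Taylor coefficients of $R_\nu$ and of $B_\nu$ (both equal to $\int t\,d\nu(t)$) are nonzero, so that $R_\nu$ and $B_\nu$ admit compositional inverses near $0$; this is what makes ${}^cT_{(\mu,\nu)}$ and ${}^cT_{\mathcal{BP}(\mu,\nu)}$ well defined in a neighbourhood of $0$.

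I would begin by rewriting \eqref{btransform}: for any compactly supported distribution $\rho$ it gives $M_\rho(z)=\bigl(1-B_\rho(z)\bigr)^{-1}$, hence $M_\rho(z)-1=B_\rho(z)\bigl(1-B_\rho(z)\bigr)^{-1}$ and $zM_\rho(z)=z\bigl(1-B_\rho(z)\bigr)^{-1}$. Substituting the $\rho=\nu$ instances into the $R$-transform relation \eqref{rtransform} yields
\[
R_\nu\!\left(\frac{z}{1-B_\nu(z)}\right)=\frac{B_\nu(z)}{1-B_\nu(z)},
\]
and, since $t\mapsto t/(1-t)$ is invertible near $0$, writing $z=B_\nu^{-1}(\zeta)$ and solving for the compositional inverse gives the key substitution identity
\[
R_\nu^{-1}\!\left(\frac{\zeta}{1-\zeta}\right)=\frac{B_\nu^{-1}(\zeta)}{1-\zeta}.
\]
Likewise, substituting the $\rho=\nu$ and $\rho=\mu$ instances into the defining relation \eqref{c-free} for $\CR$ and dividing by $M_\mu(z)$ produces
\[
\CR_{\mu,\nu}\!\left(\frac{z}{1-B_\nu(z)}\right)=\frac{B_\mu(z)}{1-B_\nu(z)}.
\]

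Next I would recall that in the scalar case the three defining forms \eqref{eq72}, \eqref{eq73}, \eqref{eq74} are the same expression, so Definition \ref{def43} says precisely that $\mathcal{BP}(\mu,\nu)=(\mu',\nu')$ with $R_{\nu'}=B_\nu$ and $\CR_{\mu',\nu'}=B_\mu$ as functions. Plugging this into the definition of ${}^cT$ gives $ {}^cT_{\mathcal{BP}(\mu,\nu)}(z)=B_\mu\bigl(B_\nu^{-1}(z)\bigr)/B_\nu^{-1}(z)$. On the other side, inserting the substitution identity into
\[
{}^cT_{(\mu,\nu)}\!\left(\frac{z}{1-z}\right)=\frac{1}{R_\nu^{-1}\bigl(z/(1-z)\bigr)}\,\CR_{\mu,\nu}\!\left(R_\nu^{-1}\!\left(\frac{z}{1-z}\right)\right)
\]
replaces $R_\nu^{-1}(z/(1-z))$ by $B_\nu^{-1}(z)/(1-z)$; then evaluating the $\CR$-identity above at the point $B_\nu^{-1}(z)$ (whose $B_\nu$-image is $z$) replaces $\CR_{\mu,\nu}\bigl(B_\nu^{-1}(z)/(1-z)\bigr)$ by $B_\mu\bigl(B_\nu^{-1}(z)\bigr)/(1-z)$. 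The factors $1-z$ cancel, and the two expressions coincide, which is the claim.

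The only step that needs genuine attention is the passage from the first $R$-versus-$B$ identity to the substitution identity for $R_\nu^{-1}$: one must check that the parametrising point $B_\nu^{-1}(\zeta)$ stays inside the common neighbourhood of $0$ on which all the series involved converge, and that $\int t\,d\nu(t)\neq0$ is genuinely used to justify each compositional inversion. As a sanity check one may note that the very same manipulations give $T_\rho(z)=1/S_\rho(z)$, so the second–coordinate content of the lemma is Belinschi and Nica's identity $S_{\mathcal{BP}(\nu)}(z)=S_\nu\bigl(z/(1-z)\bigr)$ of \cite{BN}; the argument above, however, treats both coordinates uniformly.
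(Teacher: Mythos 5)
Your proposal is correct and follows essentially the same route as the paper: both arguments combine the functional equations \eqref{rtransform}, \eqref{btransform} and \eqref{c-free} to express ${}^cT_{(\mu,\nu)}$ after the substitution $z\mapsto z/(1-z)$ as $B_\mu(B_\nu^{-1}(z))/B_\nu^{-1}(z)$, and then identify this with ${}^cT_{\mathcal{BP}(\mu,\nu)}(z)$ via $R_{\mathcal{BP}(\nu)}=B_\nu$ and $\CR_{\mathcal{BP}(\mu,\nu)}=B_\mu$ from Definition \ref{def43}. The only cosmetic difference is that you parametrize directly by $B_\nu^{-1}$ where the paper routes the same computation through $\mathcal M_\nu^{-1}$ and the identities $R_\nu^{-1}(z)=\mathcal M_\nu^{-1}(z)(1+z)$, $B_\nu^{-1}(z)=\mathcal M_\nu^{-1}(z/(1-z))$.
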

\begin{proof}
 To simplify the notations, if $\sigma$ is a compactly supported real measure, we will write
 \[
 \mathcal M_\sigma(z)=\int \frac{tz}{1-tz}d\sigma(z)=M_\sigma(z)-1.
 \]

%
%
Using equation \eqref{c-free} as re-written in the proof of Lemma \ref{lema19}, we obtain that $\frac{1}{z}B_\mu(z)=\frac{\CR_{(\mu,\nu)}(z+z\mathcal M_\nu(z))}{z+z\cM_\nu(z)}.$ Recall the defining relation for $R_\nu$ as $\cM_\nu(z)=R_\nu(z+z\cM_\nu(z))$; inverting $R_\nu$ to the left gives $R_\nu^{-1}(\cM_\nu(z))=z+z\cM_\nu(z)$, and inverting now $\cM_\nu$ to the right yields
$$
R_\nu^{-1}(z)=\cM_\nu^{-1}(z)(1+z),
$$
again for $z$ in a neighborhood of zero. We compose to the right with $\cM_\nu^{-1}$ (the assumption that $\int t\,d\nu(t)\neq0$ implies that the compositional inverse does exist on a small enough neighborhood of the origin) in the relation that defined $\CR$ and use the above:

\begin{equation}
\frac{1}{\cM_\nu^{-1}(z)}B_\mu(\cM_\nu^{-1}(z))=\frac{\CR_{(\mu,\nu)}(R_\nu^{-1}(z))}{R_\nu^{-1}(z)}={}^cT_{(\mu,\nu)}(z).
\end{equation}
On the other hand, as shown 
in \cite{BN}, $R_{\mathcal{BP}(\nu)}(z)=B_\nu(z)$, so
\begin{eqnarray*}
{}^cT_{\mathcal{BP}(\mu,\nu)}(z) & = & \frac{1}{R_{\mathcal{BP}(\mu)}^{-1}(z)}\CR_{\mathcal{BP}(\mu)}(R_{\mathcal{BP}(\nu)}^{-1}(z))\\
& = & \frac{1}{B_{\nu}^{-1}(z)}\CR_{\mathcal{BP}(\mu)}(B_{\nu}^{-1}(z)).
\end{eqnarray*}
Of course, by using the definition of $B$ we obtain that $B_\nu^{-1}(z)=\cM_\nu^{-1}\left(\frac{z}{1-z}\right)$. Thus, our lemma is proved, since from Definition \ref{def43} we have that $\CR_{\mathcal{BP}(\mu)}=B_\mu$.
\end{proof}

Now the main result of the section follows:
\begin{prop}
For any compactly supported distributions $(\mu_1,\nu_1),(\mu_2,\nu_2)$, we have
$$
\mathcal{BP}((\mu_1,\nu_1)\boxtimes_c(\mu_2,\nu_2))=\mathcal{BP}(\mu_1,\nu_1)\boxtimes_c\mathcal{BP}(\mu_2,\nu_2).
$$
\end{prop}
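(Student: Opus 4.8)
The plan is to deduce the proposition formally from the lemma just established, together with the multiplicativity of $T$ under $\boxtimes$ and of ${}^cT$ under $\boxtimes_c$. First I would record two preliminary remarks. The class of pairs of compactly supported distributions is stable under $\boxtimes_c$ (a product of bounded operators is bounded in any operatorial realization), so every distribution appearing below is compactly supported and all the germs involved are honest analytic functions near $0$. Also, $\mathcal{BP}$ preserves the first moment of the second coordinate: $\int t\,d\mathcal{BP}(\nu)(t)$ equals the first free cumulant of $\mathcal{BP}(\nu)$, which by $R_{\mathcal{BP}(\nu)}=B_\nu$ equals the first Boolean cumulant of $\nu$, i.e. $\int t\,d\nu(t)$. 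In particular the non-degeneracy assumption $\int t\,d\nu_i(t)\neq0$ (implicit in the statement, and needed to define $\boxtimes$, $\boxtimes_c$ and all the transforms below) is inherited by $\mathcal{BP}(\nu_i)$ and by the products $\nu_1\boxtimes\nu_2$ and $\mathcal{BP}(\nu_1)\boxtimes\mathcal{BP}(\nu_2)$.

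Next I would isolate the uniqueness principle that makes the comparison possible: a pair $(\alpha,\beta)$ of compactly supported distributions with $\int t\,d\beta(t)\neq0$ is completely determined by the two analytic germs $T_\beta$ and ${}^cT_{\alpha,\beta}$ at $0$. Indeed, $T_\beta$ determines the germ $R_\beta^{-1}(z)=z/T_\beta(z)$, hence (compositional inversion near $0$) $R_\beta$, hence $M_\beta$ and all moments of $\beta$; and then from ${}^cT_{\alpha,\beta}(z)\,R_\beta^{-1}(z)=\CR_{\alpha,\beta}(R_\beta^{-1}(z))$ one recovers $\CR_{\alpha,\beta}$ near $0$ (because $R_\beta^{-1}$ is a biholomorphism of a neighbourhood of $0$), after which the defining relation \eqref{c-free} determines $M_\alpha$ near $0$, hence all moments of $\alpha$, hence $\alpha$. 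Consequently, to prove the proposition it is enough to show that the two (compactly supported) pairs $\mathcal{BP}\bigl((\mu_1,\nu_1)\boxtimes_c(\mu_2,\nu_2)\bigr)$ and $\mathcal{BP}(\mu_1,\nu_1)\boxtimes_c\mathcal{BP}(\mu_2,\nu_2)$ have the same $T$-transform of the second coordinate and the same ${}^cT$-transform near $0$.

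For the ${}^cT$-transforms I would do the one-line computation combining the lemma with the multiplicativity ${}^cT_{(\sigma_1,\tau_1)\boxtimes_c(\sigma_2,\tau_2)}={}^cT_{(\sigma_1,\tau_1)}\cdot{}^cT_{(\sigma_2,\tau_2)}$:
\begin{align*}
{}^cT_{\mathcal{BP}((\mu_1,\nu_1)\boxtimes_c(\mu_2,\nu_2))}(z)
&={}^cT_{(\mu_1,\nu_1)\boxtimes_c(\mu_2,\nu_2)}\left(\frac{z}{1-z}\right)
={}^cT_{(\mu_1,\nu_1)}\left(\frac{z}{1-z}\right)\,{}^cT_{(\mu_2,\nu_2)}\left(\frac{z}{1-z}\right)\\
&={}^cT_{\mathcal{BP}(\mu_1,\nu_1)}(z)\,{}^cT_{\mathcal{BP}(\mu_2,\nu_2)}(z)
={}^cT_{\mathcal{BP}(\mu_1,\nu_1)\boxtimes_c\mathcal{BP}(\mu_2,\nu_2)}(z),
\end{align*}
all equalities holding on a neighbourhood of $0$. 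For the second coordinates, note that by Definition \ref{def43} (and $R_{\mathcal{BP}(\sigma)}=B_\sigma$) the second coordinate of $\mathcal{BP}(\rho,\sigma)$ is the single-variable $\mathcal{BP}(\sigma)$, while the second coordinate of $(\mu_1,\nu_1)\boxtimes_c(\mu_2,\nu_2)$ is $\nu_1\boxtimes\nu_2$; so the required identity becomes $\mathcal{BP}(\nu_1\boxtimes\nu_2)=\mathcal{BP}(\nu_1)\boxtimes\mathcal{BP}(\nu_2)$, which is the theorem of Belinschi and Nica \cite{BN} (it also follows from the lemma by specializing to the diagonal pairs $(\nu_i,\nu_i)$: there ${}^cT_{(\nu_i,\nu_i)}=T_{\nu_i}$, $\boxtimes_c$ restricts to $\boxtimes$, $\mathcal{BP}$ restricts to the scalar Boolean-to-free bijection, and the same chain of equalities applies). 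The uniqueness principle then yields the equality of the two pairs.

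I do not anticipate a genuine obstacle: given the lemma, the argument is essentially formal. The only points that require care are the domain bookkeeping (each compositional inverse used exists precisely because the second-coordinate first moments are nonzero, a property inherited by $\mathcal{BP}$ and by multiplicative convolution, as noted above) and making the uniqueness principle precise --- in particular the observation that $\boxtimes_c$ preserves compact support, so that $T$, ${}^cT$, $R$, $\CR$ and the moment series are all well-defined analytic germs at $0$. The real content of the proposition is carried by the lemma, i.e. by the identity ${}^cT_{\mathcal{BP}(\mu,\nu)}(z)={}^cT_{(\mu,\nu)}(z/(1-z))$.
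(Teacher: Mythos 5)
Your proposal is correct and follows essentially the same route as the paper: the identity for the first coordinates is established by exactly the same four-step chain of ${}^cT$-equalities combining the lemma with the multiplicativity of ${}^cT$ under $\boxtimes_c$, and the second coordinate is reduced to the Belinschi--Nica result $\mathcal{BP}(\nu_1\boxtimes\nu_2)=\mathcal{BP}(\nu_1)\boxtimes\mathcal{BP}(\nu_2)$. The extra care you take (the uniqueness principle showing that $T_\beta$ and ${}^cT_{\alpha,\beta}$ determine the pair, and the verification that the non-degeneracy of the first moments is preserved by $\mathcal{BP}$ and by the convolutions) only makes explicit what the paper leaves implicit.
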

\begin{proof}
For the second coordinate, this has been proved in \cite{BN}. For the first coordinate, we use the previous lemma to write
\begin{eqnarray*}
{}^cT_{\mathcal{BP}((\mu_1,\nu_1)\boxtimes_c(\mu_2,\nu_2))}(z) & = &
{}^cT_{(\mu_1,\nu_1)\boxtimes_c(\mu_2,\nu_2)}\left(\frac{z}{1-z}\right)\\
& = & {}^cT_{(\mu_1,\nu_1)}\left(\frac{z}{1-z}\right){}^cT_{(\mu_2,\nu_2)}\left(\frac{z}{1-z}\right)\\
& = & {}^cT_{\mathcal{BP}((\mu_1,\nu_1))}(z){}^cT_{\mathcal{BP}((\mu_2,\nu_2))}(z)\\
& = & {}^cT_{\mathcal{BP}(\mu_1,\nu_1)\boxtimes_c\mathcal{BP}(\mu_2,\nu_2)}(z).
\end{eqnarray*}
\end{proof}
We would like to mention, however, that regrettably the c-free convolution of positive probability measures on the positive half-line is not necessarily well-defined, while the Boolean - or c-free -  Bercovici-Pata bijection is not well defined for measures on the unit circle, as sums of unitaries are not unitaries. Thus, the above result must be viewed in terms of algebraic distributions.

\end{document}